\theoremstyle{remark}
\newtheorem{remark}{Remark}[section]
\theoremstyle{definition}
\newtheorem{theorem}[remark]{Theorem}
\newtheorem{lemma}[remark]{Lemma}
\newtheorem{corollary}[remark]{Corollary}
\newtheorem{proposition}[remark]{Proposition}
\newtheorem{definition}[remark]{Definition}
\newtheorem{example}[remark]{Example}
\title{Surfaces associated with first-order ODEs
%Symmetrizing factors for \cinf-structures
%%%% Cite as
%%%% Update your official citation here when published 
\thanks{\textit{\underline{Citation}}: 
\textbf{Authors. Title. Pages.... DOI:000000/11111.}} 
}
\author{
  *A. J. Pan-Collantes, J. A. Álvarez-García\\
  Departamento de Matem\'aticas\\ 
  Universidad de C\'{a}diz - UCA \\
  Puerto Real\\
  \texttt{\{antonio.pan@uca.es, jose.alvg@gmail.com\}} \\
  %% examples of more authors
%   \And
%   Author3 \\
%   Affiliation \\
%   Univ \\
%   City\\
%   \texttt{email@email} \\
  %% \AND
  %% Coauthor \\
  %% Affiliation \\
  %% Address \\
  %% \texttt{email} \\
  %% \And
  %% Coauthor \\
  %% Affiliation \\
  %% Address \\
  %% \texttt{email} \\
  %% \And
  %% Coauthor \\
  %% Affiliation \\
  %% Address \\
  %% \texttt{email} \\
}
\begin{document}
\newpage
\maketitle

\begin{abstract}
A link between first-order ordinary differential equations (ODEs) and 2-dimensional Riemannian manifolds is explored. Given a first-order ODE, an associated Riemannian metric on the variable space is defined, and some properties of the resulting surface are studied, in relation to the integrability of the equation. 

Next, deformations of the associated surfaces are considered. A relation between relative Jacobi fields on the deformed surface and integrability of the ODE is established, showing that this class of vector fields are useful for solving first-order ODEs. As a consequence, it is proven that if the associated surface is of constant curvature, or alternatively it can be deformed into one of constant curvature, then the ODE can be integrated by quadratures. In particular, the search for an integrating factor for the ODE is interpreted as a deformation of the associated surface into a flat one.
\end{abstract}

\textbf{2020 Mathematics Subject Classification:} 34A26, 34A05, 53B21

% keywords can be removed
\keywords{Differential equations \and Riemannian metric \and integrability \and Jacobi field \and Gaussian curvature}

\section{Introduction}
%%%%%%%%%%%%%%%%%%%%%%%%%%%%%%%%%%%%%%%%%%%%%%

First-order ordinary differential equations (ODEs) are foundational in many scientific fields, enabling the modeling and understanding of numerous phenomena including concentration and dilution problems, population models and even astrophysical research. Moreover, as the simplest form of ODEs, they serve as a natural starting point for more advanced research.

The works of S. Lie and E. Cartan \cite{lie1888classification,cartan1899certaines,cartan1924varietes} provided a geometric perspective on differential equations. Their approach, which turned out to be particularly useful in understanding and solving nonlinear ODEs, gave rise to the concepts of symmetry and invariance. This interplay between differential equations and geometric objects has been extensively studied since then, and extends to recent decades \cite{stiller1981differential, crampin1996linear,frittelli2001differential,newman2003projective,nurowski2005differential,tsamparlis2010lie,doubrov2016geometry,Bayrakdar2018a,Bayrakdar2018,autonomous2ODE,integrationJacobifields}.    

In this work, we explore an approach aligned with the perspective provided in \cite{Bayrakdar2018a, Bayrakdar2018,autonomous2ODE,integrationJacobifields}. We introduce a geometric framework in which an ODE is associated with a surface, i.e., a 2-dimensional Riemannian manifold. This correspondence allows us to explore the integrability of the ODE through geometric properties of the associated surface, such as geodesics and curvature (Section \ref{secBay}). An important role of the curvature should be expected, due to its pivotal function in both mathematics and physics. And this is indeed the case: we show that within our framework, the curvature is intimately related to the integrability of the ODE. In particular, we establish that if the associated surface is flat, then the ODE can be integrated by quadratures. 

Furthermore, we consider deformations of the associated surface, and study the implications of these deformations on the integrability of the original ODE (Section \ref{secDeform}). We show that the knowledge of a relative Jacobi field on any of the deformed surfaces (including the original surface) enables the integration of the equation. As a consequence, we prove that deforming the surface into one with constant curvature leads to the integration of the ODE. In this framework, the classical search for an integrating factor is interpreted as a deformation of the associated surface into a flat one.

%%%%%%%%%%%%%%%%%%%%%%%%%%%%%%%%%%%%%%%%%%%%%%
\section{Preliminaries}\label{preliminaries}
%%%%%%%%%%%%%%%%%%%%%%%%%%%%%%%%%%%%%%%%%%%%%%
We begin with a brief review of the main concepts and results relevant to this paper, along with the necessary notation. Given a first-order ODE in the form  
\begin{equation}\label{ODE}  
    \frac{du}{dx}=\phi(x,u),  
\end{equation}  
where \( \phi \) is a smooth function defined on an open subset \( U\subseteq \mathbb{R}^2 \), a classical approach to study this equation is through the associated vector field $A$, defined for $(x,u)\in U$ as
\begin{equation}\label{campoA}  
    A_{(x,u)}=\partial_x+\phi(x,u)\partial_u.
\end{equation}  

To analyze solutions geometrically, we introduce the following notation: given a real-valued smooth function $f: I\to \mathbb R$, with $I$ an open interval containing 0, we denote by
\begin{equation}\label{curve}  
    \gamma_f(t)=(t,f(t)),
\end{equation}
the standard parametrization of the graph of $f$.

An explicit solution $u=f(x)$ of equation \eqref{ODE} corresponds to a curve \eqref{curve} that is an integral curve of $A$, i.e., 
$$
\dot{\gamma}_f(t)=A_{\gamma_f(t)}.
$$
Conversely, any integral curve of $A$ can be reparametrized to the form $\gamma_f$, for a certain solution $f$ of \eqref{ODE} \cite{arnold1992ordinary,stephani}.

The distribution generated by $A$ is spanned by the vector fields annihilated by the 1-form
\begin{equation}\label{1form}  
    \omega=-\phi(x,u)dx+du.
\end{equation}

A first integral of $\omega$, i.e., a smooth function $F\in \mathcal{C}^{\infty}(U)$ satisfying $dF=\mu \omega$ locally, for some non-vanishing smooth function $\mu$, provides an implicit description of the solutions of \eqref{ODE} \cite{arnold1992ordinary,stephani}:
$$
F(x,u)=C, \,C \in \mathbb R.
$$
The function $\mu$ is called an integrating factor.

Now we recall some fundamental facts on Riemannian geometry. Throughout the text, the term \emph{surface} will be used to indicate a pair $(\mathcal{S},g)$ consisting of a 2-dimensional manifold $\mathcal{S}$ equipped with a Riemannian metric $g$, that is, a two-times covariant symmetric tensor field, positive definite, and hence non-degenerate. We will refer to the surface simply by $\mathcal{S}$, whenever the context is clear.

Given an orthonormal frame $\{e_1,e_2\}$ in $\mathcal{S}$ with dual coframe $\{\omega_1,\omega_2\}$, the corresponding Riemannian metric can be written as \cite{lee2006curvature}
\begin{equation}\label{metrictensor}
    g=\omega_1 \otimes \omega_1+\omega_2 \otimes \omega_2.
\end{equation}

It is a well-known fact that every surface $\mathcal{S}$ is endowed with a
uniquely determined torsionless metric connection $\nabla$, called the Levi-Civita connection. In the provided frame, this connection is described by the matrix of 1-forms
\begin{equation}\label{connectionform}
\Theta=\begin{pmatrix}
0&-T_{12}^1 \omega^1-T_{12}^2\omega^2\\ 
T_{12}^1 \omega^1+T_{12}^2 \omega^2&0\\
\end{pmatrix},
\end{equation}
where $T_{ij}^k$ are the structure coefficients of the coframe, i.e., smooth functions such that $d\omega^k=T_{ij}^k \omega^i \wedge \omega^j$. The details can be found, for instance, in \cite{chen1999lectures,Morita,ivey2016cartan}.

Consider now two vector fields described by their components in the frame $\{e_1,e_2\}$:
$$
X=
\begin{pmatrix}
    x_1\\
    x_2
\end{pmatrix}, \qquad
Y=
\begin{pmatrix}
    y_1\\
    y_2
\end{pmatrix},
$$
where $x_i,y_i$ are smooth functions defined on $\mathcal{S}$. The covariant derivative $\nabla_{X} Y$ can be expressed in this frame as
$$
\nabla_{X} Y=
\begin{pmatrix}
X(y_1)\\
X(y_2)
\end{pmatrix} +i_X \Theta \cdot \begin{pmatrix}
y_1\\
y_2\\
\end{pmatrix}.
$$
Here, \(i_X \Theta\) is a matrix whose entries represent the interior product of \(X\) with each entry of \(\Theta\), and the dot denotes matrix multiplication. Therefore
\begin{align}
    \nabla_{X} Y &= \begin{pmatrix}
    X(y_1) \\
    X(y_2)
    \end{pmatrix} + 
    \begin{pmatrix}
    0 & -T_{12}^1 x_1 - T_{12}^2 x_2 \\ 
    T_{12}^1 x_1 + T_{12}^2 x_2 & 0
    \end{pmatrix}
    \cdot
    \begin{pmatrix}
    y_1 \\
    y_2
    \end{pmatrix} \nonumber \\
    &= \begin{pmatrix}
    X(y_1) - T_{12}^1 x_1 y_2 - T_{12}^2 x_2 y_2 \\
    X(y_2) + T_{12}^1 x_1 y_1 + T_{12}^2 x_2 y_1
    \end{pmatrix}. \label{covDeriv}
\end{align}

Also, given a curve $\alpha: I\to \mathcal{S}$, where $I$ is an open interval in $\mathbb R$, and a vector field $Y$ defined along $\alpha$, the covariant derivative $\nabla_{\dot{\alpha}(t)} Y(t)$ for $t\in I$ is given by the expression
\begin{equation}\label{eq:covDerivCurve}
    \nabla_{\dot{\alpha}(t)} Y(t)= \begin{pmatrix}
        y'_1(t) - T_{12}^1 x_1(t) y_2(t) - T_{12}^2 x_2(t) y_2(t) \\
        y'_2(t) + T_{12}^1 x_1(t) y_1(t) + T_{12}^2 x_2(t) y_1(t)
        \end{pmatrix},    
\end{equation}
where $x_1(t),x_2(t)$ are the components of the tangent vector $\dot{\alpha}(t)$ in the frame $\{e_1,e_2\}$, and $y_1(t),y_2(t)$ are the components of $Y(t)$ in the same frame \cite{lee2006curvature}.

A fundamental feature of a surface is its Gaussian curvature $\mathcal{K}$. The connection form \eqref{connectionform} allows us to obtain it by means of Gauss equation \cite{chen1999lectures,ivey2016cartan}:
\begin{equation}\label{eq:curvature}
    \mathcal{K}=d\Theta^1_2(e_1,e_2).
\end{equation}

In curved spaces, the notion of a geodesic is used to extend the idea of a straight line in flat spaces. We recall the following definitions:
\begin{definition}
A curve $\gamma: I\subseteq \mathbb R \to \mathcal{S}$ is called a geodesic if
$$
\nabla_{\dot{\gamma}(t)}\dot{\gamma}(t)=0
$$
for every $t\in I$.
\end{definition}

\begin{definition}
A vector field $X$ will be called a geodesic vector field if 
$$
\nabla_{X}X=0.
$$

\end{definition}

It turns out that a vector field is a geodesic vector field if and only if its integral curves are geodesics of the surface \cite{lee2006curvature,docarmoriemannian,Carinena2023}.

An intimately related concept is that of Jacobi field, which plays a crucial role in understanding how geodesics evolve, capturing the curvature of the surface and providing a local view of geodesic variations. Essentially, they offer insights into the behavior of nearby geodesics \cite{lee2006curvature}.

\begin{definition}
A vector field $J$ along a geodesic $\gamma$ is called a Jacobi field if it satisfies the Jacobi equation:
\begin{equation}\label{eq:Jacobi}
\nabla_{\dot{\gamma}} \nabla_{\dot{\gamma}} J+\mathcal{K}\left( g(\dot{\gamma},\dot{\gamma})J-g(J,\dot{\gamma})\dot{\gamma}\right)=0,
\end{equation}
for every point $p$ in the image of $\gamma$. 
%Here $\mathcal{K}$ is the Gaussian curvature of the surface.
\end{definition}

% \begin{remark}\label{rem_variation}
% It is a well-known result \cite{lee2006curvature} that $J$ is a Jacobi field along a geodesic $\gamma$ defined on an open interval $I$ containing 0 if and only if it is the variation field of some variation of $\gamma$ through geodesics, i.e., if and only if there exists a smooth map $\Gamma:(-\varepsilon,\varepsilon)\times I  \to \mathcal{S}$, with $\varepsilon>0$, such that:
% \begin{enumerate}[label=(\alph*)]
%     \item The curve $\alpha_s(t):= \Gamma(s,t)$ is a geodesic for each $s\in (-\varepsilon,\varepsilon)$.    
%     \item $\alpha_0=\gamma$. %The curve $\alpha_0$ satisfies $\alpha_0(t)=\gamma(t)$, with $t\in I$.
%     \item $J(t)= \Gamma_* \left(\dfrac{\partial}{\partial s}\right)_{\Gamma(0,t)}$, for $t\in I$.
% \end{enumerate}    
% \end{remark}

The following definition is a natural extension of the concept of Jacobi field to vector fields defined on the surface, not necessarily along a geodesic:
\begin{definition}\label{def:Jacobi}
Let $X$ be a geodesic vector field defined on a surface $\mathcal S$. A vector field $J$ on $\mathcal{S}$ such that the restriction of $J$ to each integral curve $\gamma$ of $X$ is a Jacobi vector field along $\gamma$ will be called a Jacobi vector field relative to $X$.
\end{definition}
\begin{remark}\label{rem:jacobi}
    As it can be deduced from equation \eqref{eq:Jacobi}, $J$ is a Jacobi vector field relative to $X$ if and only if
    \begin{equation}\label{eq:Jacobirel}
        \nabla_{X} \nabla_{X} J+\mathcal{K}\left( g(X,X)J-g(J,X)X\right)=0
        \end{equation}
for every point $p\in \mathcal S$. 
\end{remark}

Finally, for the sake of completeness, we state here a well-known result concerning the commutativity of vector fields \cite[Theorem 13.10]{lee2013smooth}, which will be used later:
\begin{theorem}\label{commutingvf}
Two pointwise linearly independent vector fields $Y_1,Y_2$ defined on $U$ satisfy $[Y_1,Y_2]=0$ if and only if, for every $p\in U$, there exists a local coordinate change
$$
\begin{array}{rccc}
\varphi:& W &\to & V\\
        & (s,t) & \mapsto &(x,u)
\end{array},
$$
with $(0,0) \in W\subseteq \mathbb R^2$ and $p\in V \subseteq U$, such that $\varphi(0,0)=p$ and 
$$
\varphi_*\left(\frac{\partial}{\partial s}\right) =Y_1,\quad \varphi_*\left(\dfrac{\partial}{\partial t}\right)=Y_2.
$$  
\end{theorem}

%%%%%%%%%%%%%%%%%%%%%%%%%%%%%%      
\section{Surface associated to a first-order ODE}\label{secBay}
%%%%%%%%%%%%%%%%%%%%%%%%%%%

In this section, we revisit an approach for assigning a surface $\mathcal{S}$ to a given first-order ODE, as introduced in \cite{Bayrakdar2018a,Bayrakdar2018,autonomous2ODE,integrationJacobifields}, and analyze some of its key properties. To avoid clutter, we omit the arguments of the function $\phi$ and its partial derivatives $\phi_x, \phi_u$ when clear from the context.

\begin{definition}\label{def:asurf}
Given a first-order ODE like \eqref{ODE}, we define the associated surface $\mathcal{S}$ to be the open set $U$ together with the Riemannian metric given by
\begin{align}\label{metricBay}
    g&=( 1+ \phi^2)dx\otimes dx-\phi dx\otimes du-\phi du\otimes dx+du\otimes du,
\end{align}
or in matrix form
\begin{equation}
    g=\begin{pmatrix}
    1+ \phi^2 & -\phi \\
    - \phi  & 1
    \end{pmatrix}.
\end{equation}
\end{definition}

\begin{remark}\label{rem:prop}
The reader can easily check that, with this metric, the pair of vector fields $\{A, \partial_u\}$ constitutes an orthonormal frame for the surface $\mathcal{S}$. The corresponding dual coframe is given by the 1-forms
\begin{align*}
& \omega^1=dx,\\
& \omega^2=-\phi dx+du.
\end{align*}
\end{remark}

% The vector fields $A$ and $\partial_u$ on $\mathcal{S}$ satisfies the following properties, as can easily be checked:
% \begin{enumerate}[label=(\alph*)]
%     \item\label{it:uno} $g(A,A)=1$,
%     \item\label{it:ort} $g(A,\partial_u)=0$.
%\end{enumerate}
    
% In addition, $g(\partial_u,\partial_u)=1$, so the pair of vector fields $\{A, \partial_u\}$ constitutes an orthonormal frame for the surface $\mathcal{S}$. The corresponding dual coframe is given by the 1-forms
% \begin{align*}
% & \omega^1=dx,\\
% & \omega^2=-\phi dx+du.
% \end{align*}

To explore the geometry of the surface $\mathcal{S}$, we will first calculate the connection 1-form of the corresponding Levi-Civita connection. Observe that $d \omega^1=0$ and ${d\omega^2=\phi_u dx\wedge du=\phi_u \omega^1\wedge \omega^2}$, so according to equation \eqref{connectionform} we have:
$$
\Theta=
\begin{pmatrix}
0&-\phi_u \omega^2\\ 
\phi_u \omega^2&0\\
\end{pmatrix}
=
\begin{pmatrix}
0&\phi \phi_u dx -\phi_u du\\ 
-\phi \phi_u dx + \phi_u du&0\\
\end{pmatrix}.
$$

We are now in position to state and prove the following result, regarding the geodesics of the surface $\mathcal{S}$:
\begin{proposition}\label{propGeod}
Consider an ODE like \eqref{ODE} such that $\phi_u\neq 0$, and let $f$ be a smooth function. Then the curve $\gamma_f$ is a geodesic of $\mathcal{S}$ if and only if $f$ is a solution of \eqref{ODE}.
\end{proposition}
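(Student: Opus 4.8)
The plan is a direct computation of $\nabla_{\dot\gamma_f}\dot\gamma_f$ in the orthonormal frame $\{A,\partial_u\}$, followed by reading off the geodesic condition. First I would write the velocity of $\gamma_f$ in this frame: since $\dot\gamma_f(t)=\partial_x+f'(t)\partial_u$ and $\partial_x=A-\phi\,\partial_u$, we have $\dot\gamma_f=A+(f'-\phi)\,\partial_u$, so its components in $\{e_1,e_2\}=\{A,\partial_u\}$ are $x_1(t)=1$ and $x_2(t)=f'(t)-\phi(t,f(t))$.

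Next I would feed this into the curve covariant-derivative formula \eqref{eq:covDerivCurve} with $Y=\dot\gamma_f$ (so $y_i=x_i$), using the structure coefficients of the coframe already computed above, namely $T^1_{12}=0$ and $T^2_{12}=\phi_u$. This yields
\[
\nabla_{\dot\gamma_f}\dot\gamma_f=\begin{pmatrix}\phi_u\,(f'-\phi)^2\\[4pt]\dfrac{d}{dt}\big(f'-\phi\big)-\phi_u\,(f'-\phi)\end{pmatrix}.
\]
Hence $\gamma_f$ is a geodesic, i.e. $\nabla_{\dot\gamma_f}\dot\gamma_f=0$, precisely when both entries vanish. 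The first entry is $\phi_u\,(f'-\phi)^2$, and since $\phi_u\neq 0$ it vanishes if and only if $f'(t)=\phi(t,f(t))$ for every $t$, which is exactly the assertion that $f$ is a solution of \eqref{ODE}. Conversely, if $f$ solves \eqref{ODE} then $f'-\phi\equiv 0$ along $\gamma_f$, whence the second entry vanishes automatically and $\gamma_f$ is a geodesic. Both implications follow.

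The computation itself is routine, so the only real point of care is the role of the hypothesis: it is used exactly to pass from ``the first entry vanishes'' to ``$f'=\phi$'' (without $\phi_u\neq 0$ the connection could be flat and far more curves would be geodesics). As a consistency check, applying the same formula to the vector field $A$, whose frame components are the constants $(1,0)$, gives $\nabla_A A=0$; thus $A$ is a geodesic vector field and, since $\|A\|=1$ by Remark \ref{rem:prop}, its integral curves are arc-length parametrized geodesics. This gives an alternative route to the ``if'' direction, because $\gamma_f$ is an integral curve of $A$ exactly when $f$ solves \eqref{ODE}.
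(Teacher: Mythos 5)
Your proof is correct and follows essentially the same route as the paper: write $\dot\gamma_f$ in the orthonormal frame $\{A,\partial_u\}$ as $(1,\,f'-\phi)$, compute $\nabla_{\dot\gamma_f}\dot\gamma_f$ from the connection form, and use $\phi_u\neq 0$ on the first component to force $f'=\phi$, the converse being immediate since $f'-\phi\equiv 0$ kills both components. Be aware that the signs in your displayed covariant derivative differ from those in the paper's proof (a direct Christoffel-symbol check gives first entry $-\phi_u(f'-\phi)^2$ and second entry $f''-\phi_x-\phi\phi_u$, so the sign conventions of the intermediate formula \eqref{eq:covDerivCurve} you applied literally are inconsistent with \eqref{connectionform}), but since both entries are only ever equated to zero this has no bearing on the validity of your argument.
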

\begin{proof}
% Demostración antigua con símbolos de Christoffel
% A mere computation leads to the following Christoffel symbols for the metric $g$ in coordinates $(x,u)$:
% $$
% \begin{array}{l}
% \Gamma_{11}^1 = -\phi^2 \phi_u,\\

% \Gamma_{12}^1 = \phi \phi_u,\\

% \Gamma_{21}^1 = \phi \phi_u,\\

% \Gamma_{22}^1 = -\phi_u,\\

% \Gamma_{11}^2 = -\phi^3 \phi_u - \phi \phi_u - \phi_x,\\

% \Gamma_{12}^2 = \phi^2 \phi_u,\\

% \Gamma_{21}^2 = \phi^2 \phi_u,\\

% \Gamma_{22}^2 = -\phi \phi_u.\\
% \end{array}
% $$

% So a curve $\gamma(t)=(a(t),b(t))$ is a geodesic if it satisfies the second-order system of ODEs
% $$
% \left\{\begin{array}{l}
% \dfrac{d^2 a}{dt^2} - \phi^2 \phi_u \left(\dfrac{da}{dt}\right)^2 + 2\phi \phi_u \dfrac{da}{dt} \dfrac{db}{dt} - \phi_u \left(\dfrac{db}{dt}\right)^2 = 0,\\[0.4cm]
% \dfrac{d^2 b}{dt^2} - (\phi^3 \phi_u + \phi \phi_u + \phi_x) \left(\dfrac{da}{dt}\right)^2 + 2\phi^2 \phi_u \dfrac{da}{dt} \dfrac{db}{dt} - \phi \phi_u \left(\dfrac{db}{dt}\right)^2 = 0.
% \end{array}\right.
% $$

% This system can be particularized for the case of curves $\gamma(t)=(t,f(t))$:
% $$
% \left\{\begin{array}{l}
% -\phi^2 \phi_u + 2\phi \phi_u f'(t) - \phi_u (f'(t))^2 = 0,\\[0.4cm]
% f''(t) - \phi^3 \phi_u - \phi \phi_u - \phi_x + 2\phi^2 \phi_u f'(t) - \phi \phi_u (f'(t))^2 = 0.
% \end{array}\right.
% $$

First, observe that the components of the tangent vector $\dot{\gamma}_f(t)$ in the frame $\{A,\partial_u\}$ are
\begin{equation}\label{tangent}
\dot{\gamma}_f(t)=\begin{pmatrix}
    1\\
    f'(t)-\phi\\
\end{pmatrix},
\end{equation}
and then, using equation \eqref{eq:covDerivCurve}, we obtain 
\begin{equation}\label{covDerivPart}
    \nabla_{\dot{\gamma}_f(t)}\dot{\gamma}_f(t)=\begin{pmatrix}
        - \phi_u(f'(t)-\phi)^2\\
        -\phi_u \phi-\phi_x+f''(t)\\
    \end{pmatrix}.    
\end{equation}

If $f$ is a solution of \eqref{ODE} then $f'(t)=\phi(t,f(t))$. Differentiating both sides with respect to $t$ gives
$$
f''(t)=\phi_x+\phi_uf'(t)=\phi_x+\phi_u\phi.
$$

Hence, by substituting in \eqref{covDerivPart} we obtain
$$
\nabla_{\dot{\gamma}_f(t)}\dot{\gamma}_f(t)=
\begin{pmatrix}
    0\\
    0\\
\end{pmatrix},
$$
so $\gamma_f$ is a geodesic.

Conversely, provided that $\gamma_f$ is a geodesic of $\mathcal{S}$ we have that $\nabla_{\dot{\gamma}_f(t)}\dot{\gamma}_f(t)=0$, and then by \eqref{covDerivPart}
\begin{subequations}\label{systGeod}
\begin{align}
\phi_u\left( f'(t)- \phi\right)^2&=0,\label{systGeod1}\\%[0.4cm]
f''(t) - \phi \phi_u - \phi_x &= 0.\label{systGeod2}
\end{align}
\end{subequations}

Under the assumption $\phi_u\neq 0$, equation \eqref{systGeod1} reduces to 
\begin{equation}\label{eq1}
f'(t)-\phi=0,    
\end{equation}
and therefore $f$ is a solution of \eqref{ODE}.
\end{proof}
\begin{remark}
Observe that, in particular, this proposition shows that the vector field $A$ is a geodesic vector field (this result appears also in  \cite{Bayrakdar2018a}). Moreover, it is the only geodesic vector field $V$ such that $i_{\partial_x} V=1$.
\end{remark}

An important geometric quantity associated to a surface is its Gaussian curvature. We will calculate it for the surface $\mathcal{S}$.
\begin{proposition}\label{curvat0}
The Gaussian curvature of the surface $\mathcal{S}$ is given by the expression
\begin{equation}\label{eq:curvat0}
     \mathcal{K}=-\partial_u(A(\phi)).
\end{equation}
\end{proposition}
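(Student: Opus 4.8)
The plan is to compute the Gaussian curvature directly from the Gauss equation \eqref{eq:curvature}, using the connection form $\Theta$ that has already been determined just above Proposition \ref{propGeod}. Recall that we found
$$
\Theta^1_2 = -\phi_u\,\omega^2 = \phi\phi_u\,dx - \phi_u\,du,
$$
so the task reduces to computing $d\Theta^1_2$ and evaluating it on the orthonormal frame $\{e_1,e_2\}=\{A,\partial_u\}$.

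First I would take the exterior derivative of $\Theta^1_2$. Writing it as $\Theta^1_2 = -\phi_u\,\omega^2$ is convenient: then $d\Theta^1_2 = -d\phi_u \wedge \omega^2 - \phi_u\,d\omega^2$. Since $d\omega^2 = \phi_u\,\omega^1\wedge\omega^2$, the second term is $-\phi_u^2\,\omega^1\wedge\omega^2$. For the first term, expand $d\phi_u = (\phi_u)_x\,dx + (\phi_u)_u\,du = \phi_{ux}\,\omega^1 + \phi_{uu}\,du$; since $\omega^2 = -\phi\,dx + du$ we have $du = \omega^2 + \phi\,\omega^1$, so $d\phi_u = (\phi_{ux}+\phi\phi_{uu})\,\omega^1 + \phi_{uu}\,\omega^2 = A(\phi_u)\,\omega^1 + \phi_{uu}\,\omega^2$. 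Wedging against $\omega^2$ kills the $\phi_{uu}\,\omega^2$ piece, leaving $-A(\phi_u)\,\omega^1\wedge\omega^2$. Altogether $d\Theta^1_2 = -\big(A(\phi_u)+\phi_u^2\big)\,\omega^1\wedge\omega^2$, and since $\{e_1,e_2\}$ is the dual frame, $\mathcal K = d\Theta^1_2(e_1,e_2) = -\big(A(\phi_u)+\phi_u^2\big)$.

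It then remains to identify $A(\phi_u)+\phi_u^2$ with $\partial_u(A(\phi))$. This is a short commutator-type computation: $\partial_u(A(\phi)) = \partial_u(\phi_x + \phi\phi_u) = \phi_{xu} + \phi_u\phi_u + \phi\phi_{uu} = \phi_u^2 + (\phi_{xu}+\phi\phi_{uu}) = \phi_u^2 + A(\phi_u)$, using that $A(\phi_u)=\partial_x(\phi_u)+\phi\,\partial_u(\phi_u)=\phi_{xu}+\phi\phi_{uu}$ and that mixed partials commute. Substituting gives $\mathcal K = -\partial_u(A(\phi))$, which is \eqref{eq:curvat0}.

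I do not anticipate a genuine obstacle here — the result is a direct consequence of the Gauss equation once $\Theta$ is in hand. The only points requiring care are bookkeeping: consistently converting between the coordinate basis $\{dx,du\}$ and the coframe $\{\omega^1,\omega^2\}$ when taking exterior derivatives, and keeping track of the sign conventions in \eqref{connectionform} and \eqref{eq:curvature}. A sanity check is available: when $\phi_u=0$ the surface should be flat (indeed the metric is then a pullback of the Euclidean metric under $(x,u)\mapsto(x,u-\int\phi\,dx)$), and the formula correctly gives $\mathcal K=0$ in that case.
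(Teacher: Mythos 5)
Your proposal is correct and follows essentially the same route as the paper: both apply the Gauss equation \eqref{eq:curvature} to the connection form $\Theta^1_2=-\phi_u\,\omega^2$ already computed, take its exterior derivative, and evaluate on the frame $\{A,\partial_u\}$. The only difference is cosmetic — you expand $d\Theta^1_2$ in the coframe $\{\omega^1,\omega^2\}$ and then identify $A(\phi_u)+\phi_u^2=\partial_u(A(\phi))$, whereas the paper expands directly in $\{dx,du\}$ and regroups; both land on $\mathcal{K}=-\partial_u(A(\phi))$.
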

\begin{proof}
Considering equation \eqref{eq:curvature} we have that
\begin{align*}
\mathcal{K} &= d\Theta^1_2(A,\partial_u) \\
  &= d(\phi \phi_u dx -\phi_u du)(A,\partial_u) \\
  &= \left(-\phi_u^2-\phi\phi_{uu}-\phi_{xu}\right)dx\wedge du(A,\partial_u) \\
  &= (-\phi \phi_u-\phi_x)_u \\
  &= -\partial_u(A(\phi)).
\end{align*}
\end{proof}

% The expression \eqref{eq:curvat0} for the Gaussian curvature was used by Z. Bayrakdar and T. Bayrakdar in \cite{Bayrakdar2018a} to establish that the first-order ODEs giving rise to associated surfaces with curvature $\mathcal{K}=\mathcal{K}(x,u)$ satisfy the inhomogeneous inviscid Burgers' equations:
% $$
%     \phi_{x}+\phi \phi_u=\int^u \mathcal{K}(x,t) dt.
% $$

% Additionally, they gave conditions for a linear first-order ODE to have constant curvature. Remarkably, nonlinear first-order ODEs can also have constant curvature. The following example illustrates this for curvatures 1 and -1.
% \begin{example}
% Consider the first-order ODEs
% $$
% \frac{du}{dx}=-1+\sqrt{1-(x+u)^2}
% $$
% and    
% $$
% \frac{du}{dx}=-1+\sqrt{1+(x+u)^2}.
% $$
% The associated surfaces have constant curvature $\mathcal{K}=1$ and $\mathcal{K}=-1$, so they are examples of spherical and hyperbolic geometries, respectively.

% Interestingly, we can perform the change of variables $Y=\frac{du}{dx}+1, Z=x+u$, these ODEs correspond to the following algebraic equations
% %in the same environment, aligned
% \begin{align*}
%     Y^2+Z^2&=1,\\
%     Y^2-Z^2&=1,
% \end{align*}
% which are the equations of a circle and a hyperbola, respectively.
% \end{example}

The expression obtained in Proposition \ref{curvat0} allows us to establish a link between the flatness of the surface and the integrability of the corresponding ODE:
\begin{theorem}\label{th:integr}
Given the first-order ODE \eqref{ODE}, if the Gaussian curvature $\mathcal{K}$ of the associated surface $\mathcal{S}$ is zero, then the ODE can be fully integrated.
\end{theorem}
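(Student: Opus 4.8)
The plan is to exploit the curvature formula $\mathcal{K}=-\partial_u(A(\phi))$ from Proposition \ref{curvat0} together with the PDE \eqref{eq:PDEsf}, which any symmetrizing factor $\delta$ for $\partial_u$ must satisfy: $A^2(\delta)+\mathcal{K}\delta=0$. When $\mathcal{K}=0$ this collapses to $A^2(\delta)=0$, i.e. $A(A(\delta))=0$. So I would first look for a function $h$ with $A(h)=0$ (a first integral of $A$) that arises naturally, and then recover $\delta$ by a single integration along the flow of $A$; the non-vanishing of the resulting $\delta$ gives, via Lemma \ref{lem:factor}, an integrating factor $1/\delta$ for $\omega$, hence a quadrature.

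Concretely, the hypothesis $\mathcal{K}=0$ means $\partial_u(A(\phi))=0$, so the function $A(\phi)=\phi_x+\phi\phi_u$ depends on $x$ alone; write $A(\phi)=\psi(x)$ for a smooth function $\psi$. The key step is to produce a symmetrizing factor explicitly. Try the ansatz $\delta$ independent of $u$, say $\delta=\delta(x)$; then $A(\delta)=\delta'(x)$ and $A(A(\delta))=A(\delta'(x))=\delta''(x)$, so $A^2(\delta)=0$ forces $\delta''=0$, giving only $\delta$ affine in $x$, which need not be a symmetrizing factor. So the honest route is the other one: since $A$ is a geodesic (equivalently non-vanishing) vector field, near any point we may straighten it, $A=\partial_t$ in suitable coordinates $(s,t)$ with $s$ a first integral ($A(s)=0$); then $A^2(\delta)=0$ reads $\partial_t^2\delta=0$, so $\delta=t\,b(s)+c(s)$ for arbitrary smooth functions $b,c$ of the first integral $s$. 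One checks that a suitable choice (e.g. $b\equiv 0$, $c$ a nonvanishing function of $s$, so $\delta=c(s)$ with $A(\delta)=0$) does make $\delta\partial_u$ commute with $A$: indeed $[\delta\partial_u,A]=(\delta\phi_u-A(\delta))\partial_u$ from the computation in the proof of Lemma \ref{lem:perp}, so $\delta\partial_u$ is a Lie point symmetry iff $A(\delta)=\delta\phi_u$, i.e. iff $A(\log\delta)=\phi_u$.

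So the real content is: when $\mathcal{K}=0$, the linear first-order PDE $A(\log\delta)=\phi_u$ has a (local, nonvanishing) solution, and moreover a quadrature suffices to find it. I would argue this as follows. Flatness gives $\partial_u(\phi_x+\phi\phi_u)=0$. Now compute $A(\phi_u)$: we have $A(\phi_u)=\phi_{xu}+\phi\phi_{uu}=\partial_u(\phi_x+\phi\phi_u)-\phi_u^2=-\phi_u^2$. Hence $\phi_u$ itself satisfies the Riccati-type relation $A(\phi_u)=-\phi_u^2$ along the flow of $A$. Setting $g:=\log\delta$, the equation $A(g)=\phi_u$ together with $A(\phi_u)=-\phi_u^2$ shows $A^2(g)=A(\phi_u)=-\phi_u^2=-(A(g))^2$, which is exactly the flatness case of the PDE; but more usefully, along any integral curve $\gamma$ of $A$ (parametrized so $\frac{d}{dt}=A$) the function $p(t):=\phi_u(\gamma(t))$ solves $p'=-p^2$, so $p(t)=\frac{1}{t+c}$ for a constant $c=c(\gamma)$ depending only on the integral curve — i.e. $c$ is (a reparametrization of) a first integral of $A$. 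Then $g(t)=\int \phi_u\,dt=\log(t+c)+\text{const}$ along each curve, so $\delta=e^{g}$ is, up to a nonvanishing factor constant on integral curves, just $t+c$; packaging the constants of integration as a single smooth first integral $F_0$ of $A$ (which exists locally since $A$ is nonvanishing), one gets a globally-defined-on-a-neighbourhood, nonvanishing $\delta$ with $\delta\partial_u$ a Lie point symmetry. By Lemma \ref{lem:factor}, $1/\delta$ is an integrating factor for $\omega$, and then $F$ with $dF=(1/\delta)\omega$ is obtained by quadrature, so the ODE is integrated.

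**Main obstacle.** The delicate point is the passage from "a symmetrizing factor exists locally" to "the ODE can be fully integrated by quadratures": one must be careful that the construction of $\delta$ — which involves choosing a local first integral to straighten $A$ and then integrating $\phi_u$ along the flow — does not secretly require solving \eqref{ODE} itself. The clean way around this is to avoid straightening $A$ and instead verify directly that the explicit PDE $A(\log\delta)=\phi_u$ is solvable by quadrature under flatness: because $\phi_u$ satisfies $A(\phi_u)=-\phi_u^2$, the function $\delta:=$ (an antiderivative structure built from $\phi_u$ and one known first integral) can be written down, and the only transcendental operations used are integrations — i.e. quadratures — not the solution of a new ODE of the same difficulty. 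I would make this precise by showing that $1/\phi_u$ differs from a first integral of $A$ by the "time" function along the flow, and that this time function is itself obtainable by a quadrature once flatness is assumed; this is the step to write out carefully, and likely where the authors' proof puts its weight.
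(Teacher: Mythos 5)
Your route is genuinely different from the paper's and can be made to work, but as written it stops exactly at the step you yourself flag as delicate, so it is not yet a complete proof. The paper's argument is a two-line direct construction: flatness means $\partial_u(A(\phi))=0$, so $A(\phi)$ is a function of $x$ alone; picking any $\Psi(x)$ with $\Psi'=A(\phi)$, the function $F=\phi-\Psi$ satisfies $A(F)=A(\phi)-\Psi'=0$ and is therefore a first integral, obtained by a single quadrature. No symmetrizing factors, integrating factors, or straightening of $A$ are needed.

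Your key computation $A(\phi_u)=\phi_{xu}+\phi\phi_{uu}=\partial_u(A(\phi))-\phi_u^2=-\phi_u^2$ under flatness is correct and is in fact enough to finish, but you must close the loop explicitly rather than defer it: from $A(\phi_u)=-\phi_u^2$ you get $A(1/\phi_u)=1$ wherever $\phi_u\neq 0$, hence $1/\phi_u-x$ is a first integral of $A$ (the ``time function along the flow'' you invoke is just $x$, since $A(x)=1$); equivalently, $\delta=1/\phi_u$ satisfies $A(\delta)=\delta\phi_u$, so $\delta\partial_u$ is a Lie point symmetry and $\phi_u$ is an integrating factor for $\omega$ by Lemma \ref{lem:factor}. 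Without this (or an equivalent) explicit closure, the appeal to ``packaging the constants of integration as a single smooth first integral $F_0$ of $A$'' is circular, since producing such an $F_0$ is precisely the problem being solved. You also leave the locus $\phi_u=0$ unhandled: your $\delta$ blows up there, whereas the paper's $F=\phi-\Psi$ is defined everywhere (and in the fully degenerate case $\phi=\phi(x)$ the ODE is a bare quadrature anyway). Once completed, your approach buys a quadrature-free first integral $1/\phi_u-x$ and identifies $\phi_u$ as an explicit integrating factor for flat associated surfaces, a nice complement to the paper's construction; the cost is the extra hypothesis $\phi_u\neq 0$ and considerably more machinery.
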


\begin{proof}
% We have that the associated Riemannian metric $g_1$ is flat, so Gaussian curvature $\mathcal{K}=0$ and
% $$A(\phi_u)+\phi_u^2=0$$
% If $\phi_u=0$, $\mu=1$ is an integrating factor.\\
% Let be $\phi_u \neq 0$, in this case  $\phi_u$ is an integrating factor because
% $$\phi_{ux}+\phi \phi_{uu}+\phi_{u} \phi_{u}=A(\phi_u)+\phi_u^2=\mathcal{K}=0$$

Since $\mathcal{S}$ is flat, it follows that $\mathcal{K}=-\partial_u(A(\phi))=0$, so $A(\phi)$ is a smooth function which does not depend on the variable $u$. We now consider $\Psi$ to be any smooth function on $x$ satisfying
$$
\dfrac{d\Psi}{dx}=A(\phi).
$$

Then, the expression $F(x,u)=\phi(x,u)-\Psi(x)$ is a first integral of \eqref{campoA}. In fact, we have that 
$$
A(F)=A(\phi-\Psi)=A(\phi)-\dfrac{d\Psi}{dx}=0.
$$

So the general solution of \eqref{ODE} is given by 
$$
\phi(x,u)-\Psi(x)=C,
$$
with $C\in \mathbb R$.
\end{proof}

\begin{example}
%ver maple 091
Consider the first-order ODE
% \begin{equation}\label{odeEx}
%     (1-x)u'=e^{(x-1)u'-u}+1.
% \end{equation}
\begin{equation}\label{odeEx}
    (1-x)\frac{du}{dx}=e^{(x-1)\frac{du}{dx}-u}+1.
\end{equation}
We can put this equation in normal form
$$
\dfrac{d u}{dx}=\dfrac{\mathbf W(e^{-u-1})+1}{1-x},
$$
defined on $U=\{(x,u)\in \mathbb R^2:x\neq 1\}$. Here, $\mathbf W$ is the smooth function that satisfies
$$
\mathbf W(x)e^{\mathbf W(x)}=x,
$$
known as the Lambert function \cite{handbookfunctions}. Recall that 
$$
\dfrac{d\mathbf W(x)}{dx}=\frac{1}{e^{\mathbf W(x)}\left(1+\mathbf W(x)\right)}.
$$
The associated vector field is 
$$
A=\partial_x+\dfrac{\mathbf W(e^{-u-1})+1}{1-x}\partial_u,
$$
and the curvature $\mathcal{K}$ of the associated surface is:
\begin{align*}
\mathcal{K} & = -\partial_u\left(A\left(\frac{\mathbf{W}\left(e^{-u-1}\right) + 1}{1 - x}\right)\right) \\
  & = -\partial_u\left(\frac{1}{(x-1)^2}\right) \\
  & = 0.
\end{align*}

Since this surface is flat, equation \eqref{odeEx} can be integrated following the proof of Theorem \ref{th:integr}. Consider the function $\Psi(x)=\frac{1}{1-x}$, which satisfies 
$$
\frac{d\Psi}{dx}=\frac{1}{(x-1)^2}=A\left(\phi\right).
$$ 
Then, a first integral of $A$ is
\begin{align*}
    F(x,u)&=\phi(x, u) - \Psi(x) \\
    &= \dfrac{\mathbf W(e^{-u-1})+1}{1-x}-\dfrac{1}{1-x}\\
    &=\dfrac{\mathbf{W}(e^{-u-1})}{1-x}.
\end{align*}

Finally, we can isolate $u$ in 
$$
\dfrac{\mathbf{W}(e^{-u-1})}{1-x}=C,
$$
where $C$ is a nonzero constant, obtaining the general solution 
$$
u(x)=-\ln(C(1-x))-C(1-x)-1,
$$
defined for $C\neq 0$ and $x\in \mathbb R$ such that $C(1-x)>0$.
\end{example}

%In the next section, we will see that the condition $\mathcal{K}=0$ is not the only one that guarantees the integrability of \eqref{ODE}. In fact, we will show that any constant curvature is a sufficient condition for the integrability of \eqref{ODE}.

\vspace{1cm}

While we have established that zero curvature of the associated surface leads to integrability of the original ODE, this condition is rarely satisfied in practice. In order to extend this result to a broader class of equations, we will introduce in the next section a conformal deformation of the metric associated to the ODE, which leads to our main result: if there exists a deformation into a constant curvature surface, then the ODE is integrable.

%%%%%%%%%%%%%%%%%%%%%%%%%%%%%%%%%%%%%%%%%%%%%%%%%%%%%%%%%%
\section{Deformation of the associated surface}\label{secDeform}
%%%%%%%%%%%%%%%%%%%%%%%%%%%%%%%%%%%%%%%%%%%%%%%%%%%%%

In this section, we explore the idea of deforming the surface defined in the previous section while preserving its essential features in relation to equation \eqref{ODE}, and we study the role of this deformation in the integrability of the equation. As we will see, the deformation into a zero curvature surface is in correspondence with the finding of an integrating factor for the ODE. Furthermore, we will show that this is not the only suitable deformation that enables integrability.

In order to define a deformation of the surface $\mathcal{S}$, we relax the condition in Remark \ref{rem:prop} of $\{A,\partial_u\}$ being an orthonormal frame, and consider the family $\mathcal{G}$ of all Riemannian metrics in which $A$ and $\partial_u$ are orthogonal and $A$ has unit length. This family of metrics can be \emph{indexed} by the set $\mathcal{C}^{\infty}(U)$ of smooth functions defined on $U$, as stated in the following proposition:
\begin{proposition}
Every Riemannian metric in the family $\mathcal{G}$ can be expressed in coordinates $(x,u)$ as the matrix
\begin{equation}\label{deformations}
g_{\epsilon}=\begin{pmatrix}
1+ \phi^2 e^{2\epsilon} & -\phi e^{2\epsilon}\\
- \phi e^{2\epsilon}  & e^{2\epsilon} 
\end{pmatrix},
\end{equation}
with $\epsilon=\epsilon(x,u) \in\mathcal{C}^{\infty}(U)$.
\end{proposition}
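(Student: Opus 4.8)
The plan is to begin from a completely arbitrary Riemannian metric on $U$, written in the coordinate coframe $\{dx,du\}$ as a symmetric positive-definite matrix $g=\left(\begin{smallmatrix} E & F\\ F & G\end{smallmatrix}\right)$ with $E,F,G\in\mathcal{C}^{\infty}(U)$, and then to translate the two defining conditions of $\mathcal{G}$ into algebraic relations among $E,F,G$. In the frame $\{\partial_x,\partial_u\}$ the vector field $A=\partial_x+\phi\,\partial_u$ has components $(1,\phi)$ and $\partial_u$ has components $(0,1)$, so orthogonality $g(A,\partial_u)=0$ reads $F+G\phi=0$, i.e. $F=-\phi G$, while the normalization $g(A,A)=1$ reads $E+2F\phi+G\phi^2=1$, which upon substituting $F=-\phi G$ collapses to $E-G\phi^2=1$, i.e. $E=1+\phi^2 G$. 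Hence every metric in $\mathcal{G}$ is of the form $\left(\begin{smallmatrix} 1+\phi^2 G & -\phi G\\ -\phi G & G\end{smallmatrix}\right)$ for some smooth function $G$.

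The next step is to determine exactly which functions $G$ are admissible, i.e. to impose positive-definiteness. A direct computation gives $\det g=(1+\phi^2 G)G-\phi^2G^2=G$, and the upper-left $1\times1$ minor is $1+\phi^2 G$; consequently $g$ is positive definite if and only if $G>0$, since $G>0$ already forces $1+\phi^2G>0$. Therefore $\mathcal{G}$ consists precisely of the matrices above with $G\in\mathcal{C}^{\infty}(U)$ and $G>0$. Because such a $G$ is smooth and strictly positive, the function $\epsilon:=\tfrac12\ln G$ is a well-defined element of $\mathcal{C}^{\infty}(U)$ and $G=e^{2\epsilon}$; substituting this back yields exactly the matrix $g_{\epsilon}$ of \eqref{deformations}.

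Finally I would check the converse direction, namely that for every $\epsilon\in\mathcal{C}^{\infty}(U)$ the matrix $g_{\epsilon}$ indeed belongs to $\mathcal{G}$: it is symmetric with $\det g_{\epsilon}=e^{2\epsilon}>0$ and positive diagonal, hence a Riemannian metric, and the same one-line evaluations as above show $g_{\epsilon}(A,A)=1$ and $g_{\epsilon}(A,\partial_u)=0$. This closes the argument. There is no genuinely hard step here; the computation is routine, and the only point deserving a moment of care is verifying that positive-definiteness is equivalent to the single inequality $G>0$, which is what makes the exponential substitution both legitimate and unambiguous, with the smoothness of $\epsilon$ inherited directly from that of $G$.
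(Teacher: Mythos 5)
Your proof is correct. It reaches the same parametrization as the paper but by a slightly different, more elementary route: you write a general metric in coordinates as $\left(\begin{smallmatrix} E & F\\ F & G\end{smallmatrix}\right)$ and solve the two conditions $g(A,\partial_u)=0$ and $g(A,A)=1$ directly for $E$ and $F$ in terms of $G=g(\partial_u,\partial_u)$, whereas the paper observes that the two conditions make $\{A,\,e^{-\epsilon}\partial_u\}$ an orthonormal frame (with $\epsilon=\ln\sqrt{\tilde g(\partial_u,\partial_u)}$, i.e.\ the same $\epsilon=\tfrac12\ln G$ as yours) and then expands $\tilde g=\omega_{\epsilon}^1\otimes\omega_{\epsilon}^1+\omega_{\epsilon}^2\otimes\omega_{\epsilon}^2$ in the dual coframe. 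Both arguments hinge on the same fact, namely that the only residual freedom is the positive function $g(\partial_u,\partial_u)$, written as $e^{2\epsilon}$. Your version buys two things the paper leaves implicit: an explicit justification that positive-definiteness is exactly the condition $G>0$ (so that $\epsilon=\tfrac12\ln G$ is well defined and smooth), and the converse check that every $g_{\epsilon}$ actually lies in $\mathcal{G}$, which is needed for the family to be genuinely indexed by all of $\mathcal{C}^{\infty}(U)$. The paper's frame-based version buys the orthonormal coframe $\{\omega_{\epsilon}^1,\omega_{\epsilon}^2\}$ as a byproduct, which is then reused immediately to compute the connection form and the curvature of $\mathcal{S}_{\epsilon}$.
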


\begin{proof}
Consider any Riemannian metric $\tilde{g}$ in $\mathcal{G}$. The vector field $ \tilde{g}(\partial_u,\partial_u)^{-\frac{1}{2}} \partial_u$ has unit length. Hence, the pair of vector fields 
$$
\left\{A, \tilde{g}(\partial_u,\partial_u)^{-\frac{1}{2}} \partial_u\right\}
$$
forms an orthonormal frame with respect to $\tilde{g}$.

By defining $\epsilon(x,u):=\ln \left(\sqrt{\tilde{g}(\partial_u,\partial_u)}\right)$, this frame can be rewritten as
\begin{equation}\label{eq:ortframe}
\{A,e^{-\epsilon}\partial_u\},
\end{equation}
and the corresponding orthonormal dual coframe, denoted by $\{\omega_{\epsilon}^1,\omega_{\epsilon}^2\}$, is given by
\begin{equation}\label{eq:cofdef}
\begin{aligned}
& \omega_{\epsilon}^1=dx,
\\
& \omega_{\epsilon}^2=-e^\epsilon \phi dx+e^\epsilon du.
\end{aligned}    
\end{equation}

Then, the Riemannian metric $\tilde{g}$ can be expressed, according to equation \eqref{metrictensor}, as:
\[
\begin{aligned}
\tilde{g} &= \omega_{\epsilon}^1 \otimes \omega_{\epsilon}^1 + \omega_{\epsilon}^2 \otimes \omega_{\epsilon}^2\\
&= dx \otimes dx + \left(-e^\epsilon \phi dx+e^\epsilon du\right) \otimes \left(-e^\epsilon \phi dx+e^\epsilon du\right)\\
&= dx \otimes dx + \phi^2 e^{2\epsilon} dx \otimes dx - \phi e^{2\epsilon} dx \otimes du - \phi e^{2\epsilon} du \otimes dx + e^{2\epsilon} du \otimes du\\
&= (1 + \phi^2 e^{2\epsilon}) dx \otimes dx - \phi e^{2\epsilon} dx \otimes du - \phi e^{2\epsilon} du \otimes dx + e^{2\epsilon} du \otimes du.
\end{aligned}
\]

In matrix form, this becomes:
\begin{equation}\label{metrigepsilon}
    \tilde{g}= \begin{pmatrix}
        1+ \phi^2 e^{2\epsilon} & -\phi e^{2\epsilon}\\
        - \phi e^{2\epsilon}  & e^{2\epsilon}
        \end{pmatrix}.    
\end{equation}

\end{proof}

\begin{remark}
    Let us denote by $g_{\epsilon}$ the metric given by the matrix \eqref{metrigepsilon} and by $\mathcal{S}_{\epsilon}$ the surface $(U, g_{\epsilon})$. Observe that the surface $\mathcal{S}$ from Definition \ref{def:asurf} is clearly the member of the family $\mathcal{G}$ corresponding to the choice $\epsilon = 0$, that is, $\mathcal{S} = \mathcal{S}_0$. Hence, the set  
    $$
    \{\mathcal{S}_{\epsilon} : \epsilon \in \mathcal{C}^{\infty}(U)\}
    $$  
    can be regarded as a \emph{deformation} of $\mathcal{S}$.  
\end{remark}

Now, we proceed to explore the geometry of the deformed surfaces $\mathcal{S}_{\epsilon}$. Recall that Proposition \ref{propGeod} establishes that the solutions to equation \eqref{ODE} correspond to geodesics of $\mathcal{S}_0$. Notably, this set of geodesics remains as geodesics through the deformation. In what follows, we will use the notation
\begin{equation}\label{deltaeps}
    \Delta_{\epsilon}:=A(\epsilon)+\phi_u
\end{equation}
to simplify the formulas.

\begin{proposition}\label{geodesicDef}
The vector field $A$ is a geodesic vector field for every surface $\mathcal{S}_{\epsilon}$.
\end{proposition}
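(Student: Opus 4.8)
The plan is to compute $\nabla^{\epsilon}_A A$ directly in the orthonormal frame $\{A, e^{-\epsilon}\partial_u\}$ of $\mathcal{S}_\epsilon$, using the same machinery as for $\mathcal{S}_0$ but now with the deformed coframe $\{\omega_\epsilon^1, \omega_\epsilon^2\}$ from \eqref{eq:cofdef}. First I would record the components of $A$ in this frame: since $A$ is itself the first member of the frame, its components are simply $(1,0)^{T}$. So the covariant derivative formula \eqref{covDeriv} reduces to $\nabla^{\epsilon}_A A = i_A\Theta_\epsilon \cdot (1,0)^{T}$, whose only potentially nonzero entry is the lower one, namely $-(T^1_{12}x_1 + T^2_{12}x_2)\cdot 1$ evaluated with $A$'s components, i.e. $-i_A(T^1_{12}\omega_\epsilon^1 + T^2_{12}\omega_\epsilon^2)$ where $T^k_{12}$ are the structure coefficients of the deformed coframe. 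Thus the whole statement comes down to showing that this scalar, call it $-\sigma$, makes $\nabla^{\epsilon}_A A$ a multiple of $A$ — but a multiple of $A = (1,0)^T$ has zero second component, so I actually need to verify the second component vanishes, which forces $\sigma = 0$ unless I allow a pregeodesic. Let me be careful: $\nabla^\epsilon_A A$ has first component $A(1) + (\text{stuff})\cdot 0 = 0$ and second component $A(0) - (T^1_{12}x_1+T^2_{12}x_2)\cdot 1 = -(T^1_{12}+ 0)$ since $A$ has frame-components $(1,0)$. Hence $\nabla^\epsilon_A A = (0, -T^1_{12})^T$, which is a multiple of $A=(1,0)^T$ only if it is the zero vector, i.e. only if $T^1_{12}=0$. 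So the real content is: the structure coefficient $T^1_{12}$ of the deformed coframe vanishes.

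Next I would compute $d\omega_\epsilon^1$ and $d\omega_\epsilon^2$ and extract $T^1_{12}$. We have $\omega_\epsilon^1 = dx$, so $d\omega_\epsilon^1 = 0$, which immediately gives $T^1_{12} = 0$ (since $d\omega_\epsilon^1 = T^1_{12}\,\omega_\epsilon^1\wedge\omega_\epsilon^2$). That is the whole point. Therefore $\nabla^\epsilon_A A = 0$ and $A$ is a genuine geodesic vector field for every $\mathcal{S}_\epsilon$, not merely a pregeodesic one. For completeness I would also note $d\omega_\epsilon^2 = d(-e^\epsilon\phi\,dx + e^\epsilon\,du) = \Delta_\epsilon\,\omega_\epsilon^1\wedge\omega_\epsilon^2$ after a short computation (this is where $\Delta_\epsilon = A(\epsilon)+\phi_u$ enters and foreshadows the later sections), giving $T^2_{12}=\Delta_\epsilon$ and the connection form $\Theta_\epsilon = \begin{pmatrix} 0 & -\Delta_\epsilon\,\omega_\epsilon^2 \\ \Delta_\epsilon\,\omega_\epsilon^2 & 0\end{pmatrix}$; but strictly only $T^1_{12}=0$ is needed for the claim.

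Here is the proof.

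\begin{proof}
Fix $\epsilon\in\mathcal{C}^\infty(U)$ and work in the orthonormal frame \eqref{eq:ortframe} of $\mathcal{S}_\epsilon$, with dual coframe $\{\omega_\epsilon^1,\omega_\epsilon^2\}$ given by \eqref{eq:cofdef}. In this frame the vector field $A$ has components $(1,0)^T$, since $A$ is the first frame vector. Because $\omega_\epsilon^1 = dx$, we have $d\omega_\epsilon^1 = 0$, so the structure coefficient $T^1_{12}$ of this coframe vanishes. Writing $\Theta_\epsilon$ for the Levi-Civita connection form \eqref{connectionform} of $\mathcal{S}_\epsilon$ and applying the covariant derivative formula \eqref{covDeriv} with $X=Y=A$, namely with $x_1=y_1=1$ and $x_2=y_2=0$, we obtain
$$
\nabla^{\epsilon}_{A}A=\begin{pmatrix} A(1)+T^1_{12}\cdot 1\cdot 0+T^2_{12}\cdot 0\cdot 0\\ A(0)-T^1_{12}\cdot 1\cdot 1-T^2_{12}\cdot 0\cdot 1\end{pmatrix}=\begin{pmatrix}0\\-T^1_{12}\end{pmatrix}=\begin{pmatrix}0\\0\end{pmatrix}.
$$
Hence $\nabla^{\epsilon}_{A}A=0$, so $A$ is a geodesic vector field of $\mathcal{S}_\epsilon$. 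As $\epsilon$ was arbitrary, this holds for every surface $\mathcal{S}_\epsilon$.
\end{proof}
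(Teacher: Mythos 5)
Your proof is correct and follows essentially the same route as the paper: compute the structure coefficients of the deformed coframe $\{\omega_\epsilon^1,\omega_\epsilon^2\}$, note that $d\omega_\epsilon^1=0$, and conclude $\nabla_A A=0$ from the covariant derivative formula \eqref{covDeriv}. The paper additionally records $d\omega_\epsilon^2=\Delta_\epsilon\,\omega_\epsilon^1\wedge\omega_\epsilon^2$ and the full connection form \eqref{LCdef} (which it reuses later), whereas you correctly isolate that only $T^1_{12}=0$ is needed for this particular claim.
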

\begin{proof}
Taking into account that 
\begin{align*}
& d\omega_{\epsilon}^1=0,
\\
& d\omega_{\epsilon}^2=(\epsilon_x+\epsilon_u \phi+ \phi_u) \omega_{\epsilon}^1 \wedge \omega_{\epsilon}^2=\Delta_{\epsilon} \omega_{\epsilon}^1 \wedge \omega_{\epsilon}^2,
\end{align*}
and according to \eqref{connectionform}, we can write the connection form for the Levi-Civita connection of $S_{\epsilon}$ with respect to the orthonormal frame \eqref{eq:ortframe} as
\begin{equation}\label{LCdef}
\Theta=\begin{pmatrix}
    0 & -\Delta_{\epsilon} \omega_{\epsilon}^2\\ 
    \Delta_{\epsilon} \omega_{\epsilon}^2 & 0
\end{pmatrix}.    
\end{equation}

Therefore, $\nabla_{A} A=0$, and $A$ is a geodesic vector field.
\end{proof}

To relate the Gaussian curvature of the deformed surfaces, $\mathcal{K}_{\epsilon}$, and the integrability of \eqref{ODE}, we first need an explicit expression for $\mathcal{K}_{\epsilon}$:
\begin{proposition}\label{prop:curvature}
The Gaussian curvature of $\mathcal{S}_{\epsilon}$ is given by the expression
\begin{equation}
    \mathcal{K}_{\epsilon}=-A(\Delta_{\epsilon})-\Delta_{\epsilon}^2.
\end{equation}   

\end{proposition}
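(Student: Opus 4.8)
The plan is to apply the Gauss equation \eqref{eq:curvature} directly to the connection form \eqref{LCdef} of $\mathcal{S}_{\epsilon}$, exactly mirroring the proof of Proposition \ref{curvat0} but keeping track of the deformation parameter. From \eqref{LCdef} we read off $\Theta^1_2 = -\Delta_{\epsilon}\,\omega_{\epsilon}^2$, and since by \eqref{eq:cofdef} we have $\omega_{\epsilon}^2 = -e^{\epsilon}\phi\,dx + e^{\epsilon}\,du$, the curvature is $\mathcal{K}_{\epsilon} = d\Theta^1_2(A,\partial_u)$ evaluated in the orthonormal frame $\{A, e^{-\epsilon}\partial_u\}$. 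The first thing to settle is which frame the formula \eqref{eq:curvature} demands: it is stated with respect to an orthonormal frame, so I should evaluate $d\Theta^1_2$ on $(A, e^{-\epsilon}\partial_u)$, or equivalently compute $d\Theta^1_2 = c\,\omega_{\epsilon}^1\wedge\omega_{\epsilon}^2$ and read off the coefficient $c$.

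The computation splits as $d\Theta^1_2 = -d\Delta_{\epsilon}\wedge\omega_{\epsilon}^2 - \Delta_{\epsilon}\,d\omega_{\epsilon}^2$. For the second term, Proposition \ref{geodesicDef} already gives $d\omega_{\epsilon}^2 = \Delta_{\epsilon}\,\omega_{\epsilon}^1\wedge\omega_{\epsilon}^2$, contributing $-\Delta_{\epsilon}^2\,\omega_{\epsilon}^1\wedge\omega_{\epsilon}^2$. For the first term I need to express $d\Delta_{\epsilon}$ in the coframe $\{\omega_{\epsilon}^1,\omega_{\epsilon}^2\}$; only the $\omega_{\epsilon}^1$-component survives the wedge with $\omega_{\epsilon}^2$. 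Writing $d\Delta_{\epsilon} = (\Delta_{\epsilon})_x\,dx + (\Delta_{\epsilon})_u\,du$ and using the dual relations $dx = \omega_{\epsilon}^1$, $du = \phi\,\omega_{\epsilon}^1 + e^{-\epsilon}\omega_{\epsilon}^2$, the coefficient of $\omega_{\epsilon}^1$ in $d\Delta_{\epsilon}$ is $(\Delta_{\epsilon})_x + \phi(\Delta_{\epsilon})_u = A(\Delta_{\epsilon})$. Hence $-d\Delta_{\epsilon}\wedge\omega_{\epsilon}^2$ contributes $-A(\Delta_{\epsilon})\,\omega_{\epsilon}^1\wedge\omega_{\epsilon}^2$, and combining, $\mathcal{K}_{\epsilon} = -A(\Delta_{\epsilon}) - \Delta_{\epsilon}^2$, as claimed. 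Alternatively, one can evaluate $d\Theta^1_2(A,\partial_u)$ using $dx(A) = 1$, $dx(\partial_u)=0$, $du(A)=\phi$, $du(\partial_u)=1$ and the identity $d\theta(X,Y) = X(\theta(Y)) - Y(\theta(X)) - \theta([X,Y])$, which reproduces the same answer after using $[A,\partial_u] = -\phi_u\partial_u$.

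I do not anticipate a genuine obstacle here — the result is a routine Cartan-structure-equation computation — but the one point requiring care is bookkeeping of factors of $e^{\epsilon}$: the curvature must come out expressed purely in terms of $A(\Delta_{\epsilon})$ and $\Delta_{\epsilon}^2$ with no residual exponential, so I would double-check that the $e^{\epsilon}$ in $\omega_{\epsilon}^2$ cancels against the $e^{-\epsilon}$ hidden in rewriting $du$ via the dual coframe (equivalently, that evaluating on the orthonormal frame $\{A, e^{-\epsilon}\partial_u\}$ versus the coordinate frame $\{A,\partial_u\}$ is handled consistently). A useful sanity check is $\epsilon = 0$: then $\Delta_0 = \phi_u$ and the formula gives $\mathcal{K}_0 = -A(\phi_u) - \phi_u^2 = -A(\phi_u) - \phi_u\phi_u$, which should agree with $\mathcal{K} = -\partial_u(A(\phi)) = -(A(\phi))_u = -A(\phi_u) - \phi_u\phi_u$ since $\partial_u$ and $A$ have commutator $-\phi_u\partial_u$; this matches Proposition \ref{curvat0} and confirms the constant in front is right.
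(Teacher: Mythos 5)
Your proposal is correct and follows essentially the same route as the paper's proof: compute $d\Theta^1_2=-d\Delta_{\epsilon}\wedge\omega_{\epsilon}^2-\Delta_{\epsilon}\,d\omega_{\epsilon}^2=-\left(A(\Delta_{\epsilon})+\Delta_{\epsilon}^2\right)\omega_{\epsilon}^1\wedge\omega_{\epsilon}^2$ and evaluate on the orthonormal frame $\{A,e^{-\epsilon}\partial_u\}$. The bookkeeping of the $e^{\pm\epsilon}$ factors and the $\epsilon=0$ consistency check with Proposition \ref{curvat0} are both handled correctly.
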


\begin{proof}
Note that
$$
\begin{aligned}
    d\Theta_2^1 &= -d\Delta_{\epsilon} \wedge w^2 - \Delta_{\epsilon} \wedge dw^2 \\
    &= -\left( (\Delta_{\epsilon})_x + (\Delta_{\epsilon})_u \phi + \Delta_{\epsilon}^2 \right) w^1 \wedge w^2 \\
    &= -\left( A(\Delta_{\epsilon}) + \Delta_{\epsilon}^2 \right) w^1 \wedge w^2,
\end{aligned}
$$
and according to equation \eqref{eq:curvature}
$$
\mathcal{K}_{\epsilon}=d\Theta_2^1(A,e^{-\epsilon}\partial_u)=-A(\Delta_{\epsilon})-\Delta_{\epsilon}^2.
$$
\end{proof}

Before proceeding to the main result of the paper, we will introduce two operators that will play a crucial role in the proofs of the results. 
\begin{definition}\label{def:oper}
Given a first-order ODE \eqref{ODE} we define two operators, $\mathfrak{T}_{\epsilon}$ and $\mathfrak{S}_{\epsilon}$, as follows:
\begin{equation}
    \begin{aligned}
        \mathfrak{T}_{\epsilon}(h)&:=A(h)+\Delta_{\epsilon}h,\\
        \mathfrak{S}_{\epsilon}(h)&:=A(h)-\Delta_{\epsilon}h,\\
    \end{aligned}
\end{equation}
where $h\in \mathcal{C}^{\infty}(U)$.
\end{definition}

These operators satisfy the following properties:
\begin{lemma}\label{prop:intsymop}
Let $h\in \mathcal{C}^{\infty}(U)$. Then:
\begin{enumerate}[label=(\alph*)]
    \item \label{it:integrating} $\mathfrak{T}_{\epsilon}(h)=0$ if and only if $e^{\epsilon} h$ is an integrating factor of \eqref{1form}.
    \item If $h$ is non-vanishing in $U$, then $\mathfrak{S}_{\epsilon}(h)=0$ if and only if $e^{\epsilon} h^{-1}$ is an integrating factor of \eqref{1form}.
\end{enumerate} 
\end{lemma}
\begin{proof}
For the proof of part (a), observe that
\begin{equation}
    \begin{aligned}
        d\left(e^{\epsilon} h (-\phi dx+du)\right)
        &= (- e^{\epsilon} h \phi)_u du\wedge dx+(e^{\epsilon} h)_x dx\wedge du \\
        &= \left((e^{\epsilon} h)_u \phi+e^{\epsilon} h\phi_u+(e^{\epsilon} h)_x\right) dx\wedge du \\
        &= \left(A(e^{\epsilon} h) + \phi_u e^{\epsilon} h\right) dx \wedge du \\
        &= \left(e^{\epsilon} A(\epsilon)h + e^{\epsilon} A(h) + \phi_u e^{\epsilon}h\right)dx \wedge du \\
        &= e^{\epsilon} \left(A(\epsilon)h + A(h) + \phi_u h\right)dx \wedge du \\
        &= e^{\epsilon} \mathfrak{T}_{\epsilon}(h)dx \wedge du.
    \end{aligned}
\end{equation}
% \begin{equation}
% \begin{aligned}
%     d(e^{\epsilon} g (-\phi dx+du))\\
%     =(- e^{\epsilon} g \phi)_u du\wedge dx+(e^{\epsilon} g)_x dx\wedge du\\
%     =\left((e^{\epsilon} g)_u \phi+e^{\epsilon} g\phi_u+(e^{\epsilon} g)_x\right) dx\wedge du\\
%     =\left(A(e^{\epsilon} g)+\phi_u e^{\epsilon} g\right) dx\wedge du\\
%     =\left(e^{\epsilon} A(\epsilon)g+e^{\epsilon} A(g)+\phi_u e^{\epsilon}g\right)dx\wedge du\\
%     =e^{\epsilon} \left(A(\epsilon)g+ A(g)+\phi_u g\right)dx\wedge du\\
%     =e^{\epsilon} \mathfrak{T}_{\epsilon}(g)dx\wedge du.
% \end{aligned}
% \end{equation}
Then, $e^{\epsilon} h$ is an integrating factor of \eqref{1form} if and only if $\mathfrak{T}_{\epsilon}(h)=0$.

To prove part (b), we proceed similarly:
\begin{equation}
    \begin{aligned}
        d\left(e^{\epsilon}h^{-1} (-\phi dx+du)\right)
        &= \left(- \frac{e^{\epsilon}}{h} \phi\right)_u du\wedge dx+\left(\frac{e^{\epsilon}}{h}\right)_x dx\wedge du \\
        &= \left(\left(\frac{e^{\epsilon}}{h}\right)_u \phi+\frac{e^{\epsilon}}{h}\phi_u+\left(\frac{e^{\epsilon}}{h}\right)_x\right) dx\wedge du \\
        &= \left(A\left(\frac{e^{\epsilon}}{h}\right) + \phi_u \frac{e^{\epsilon}}{h}\right) dx \wedge du \\
        &= \frac{e^{\epsilon}A(\epsilon)h-e^{\epsilon}A(h)+e^{\epsilon}h\phi_u}{ h^2}dx \wedge du \\
        &= \frac{e^{\epsilon}}{h^2}\left(A(\epsilon)h-A(h)+h\phi_u\right)dx \wedge du \\
        &= -\frac{e^{\epsilon}}{h^2} \mathfrak{S}_{\epsilon}(h)dx \wedge du.
    \end{aligned}
\end{equation}

Therefore, $e^{\epsilon} h^{-1}$ is an integrating factor of \eqref{1form} if and only if $\mathfrak{S}_{\epsilon}(h)=0$.
\end{proof}

Additionally, these operators are related to the curvature of the surface $\mathcal{S}_{\epsilon}$, as shown in the following lemma:
\begin{lemma}\label{lem:op}
    It is satisfied that
    \begin{equation}
        \mathfrak{T}_{\epsilon}\circ \mathfrak{S}_{\epsilon}=A^2+\mathcal{K}_{\epsilon}.
    \end{equation}
\end{lemma}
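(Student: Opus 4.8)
The plan is to expand the composition $\mathfrak{T}_{\epsilon}\circ \mathfrak{S}_{\epsilon}$ directly using the definitions in Definition \ref{def:oper}, and simplify using the formula for $\mathcal{K}_{\epsilon}$ from Proposition \ref{prop:curvature}. This is essentially a routine algebraic identity for first-order differential operators on $\mathcal{C}^{\infty}(U)$, so the main work is bookkeeping rather than any conceptual difficulty.

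First I would take an arbitrary $h\in\mathcal{C}^{\infty}(U)$ and compute $\mathfrak{S}_{\epsilon}(h)=A(h)-\Delta_{\epsilon}h$. Applying $\mathfrak{T}_{\epsilon}$ gives
$$
\mathfrak{T}_{\epsilon}(\mathfrak{S}_{\epsilon}(h))=A\bigl(A(h)-\Delta_{\epsilon}h\bigr)+\Delta_{\epsilon}\bigl(A(h)-\Delta_{\epsilon}h\bigr).
$$
Next I would use the Leibniz rule $A(\Delta_{\epsilon}h)=A(\Delta_{\epsilon})h+\Delta_{\epsilon}A(h)$ to rewrite the right-hand side as
$$
A^2(h)-A(\Delta_{\epsilon})h-\Delta_{\epsilon}A(h)+\Delta_{\epsilon}A(h)-\Delta_{\epsilon}^2 h
=A^2(h)+\bigl(-A(\Delta_{\epsilon})-\Delta_{\epsilon}^2\bigr)h.
$$
The two $\pm\Delta_{\epsilon}A(h)$ terms cancel, which is the reason the identity works out cleanly; this cancellation is precisely what motivates the asymmetric signs in the definitions of $\mathfrak{T}_{\epsilon}$ and $\mathfrak{S}_{\epsilon}$.

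Finally I would invoke Proposition \ref{prop:curvature}, which states $\mathcal{K}_{\epsilon}=-A(\Delta_{\epsilon})-\Delta_{\epsilon}^2$, to recognize the coefficient of $h$ as exactly $\mathcal{K}_{\epsilon}$, yielding $\mathfrak{T}_{\epsilon}(\mathfrak{S}_{\epsilon}(h))=A^2(h)+\mathcal{K}_{\epsilon}h$ for every $h$, which is the claimed operator identity $\mathfrak{T}_{\epsilon}\circ\mathfrak{S}_{\epsilon}=A^2+\mathcal{K}_{\epsilon}$. There is no real obstacle here; the only thing to be careful about is treating $\mathcal{K}_{\epsilon}$ (and $\Delta_{\epsilon}$) as multiplication operators and keeping the Leibniz expansion of $A(\Delta_{\epsilon}h)$ correct, so that the middle terms cancel as intended.
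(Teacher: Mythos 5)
Your proof is correct and follows exactly the paper's own argument: expand $\mathfrak{T}_{\epsilon}\circ\mathfrak{S}_{\epsilon}(h)=(A+\Delta_{\epsilon})\circ(A-\Delta_{\epsilon})(h)$, apply the Leibniz rule to $A(\Delta_{\epsilon}h)$ so the $\pm\Delta_{\epsilon}A(h)$ terms cancel, and identify $-A(\Delta_{\epsilon})-\Delta_{\epsilon}^2$ with $\mathcal{K}_{\epsilon}$ via Proposition \ref{prop:curvature}. No differences worth noting.
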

\begin{proof}
    Consider a smooth function $h$ defined on $U$, then
    \begin{align*}
        \mathfrak{T}_{\epsilon}\circ \mathfrak{S}_{\epsilon} (h)&= (A+\Delta_{\epsilon})\circ (A-\Delta_{\epsilon})(h) \\
        &= A^2(h)-A(\Delta_{\epsilon} h)+\Delta_{\epsilon} A(h) - \Delta_{\epsilon}^2 h\\
        &= A^2(h)-A(\Delta_{\epsilon})h-\Delta_{\epsilon}A(h)+\Delta_{\epsilon} A(h) - \Delta_{\epsilon}^2 h\\
        &= A^2(h) + \mathcal{K}_{\epsilon}h.
    \end{align*}
\end{proof}

Recall that the notion of Jacobi field is closely related to curvature, on the one hand, and geodesics, on the other. And since $A$ is a geodesic vector field, it is natural to inquire about the Jacobi fields relative to $A$ in the sense of Definition \ref{def:Jacobi}, i.e., vector fields $J$ satisfying equation \eqref{eq:Jacobirel}:
\begin{equation}\label{JacobiDef}
    \nabla_{A} \nabla_{A}J+\mathcal{K}_{\epsilon}\left(J-g(J,A)A\right)=0.
\end{equation}

Observe that it can be checked that the vector fields of the form $J=(ax+b)A$, for $a,b\in \mathbb R$, trivially satisfy equation \eqref{JacobiDef}. We will refer to them as trivial Jacobi fields relative to $A$. The following result establishes the role of non-trivial Jacobi fields in the integrability of \eqref{ODE}.

\begin{theorem}\label{th:Jacobi}
The knowledge of a non-trivial Jacobi vector field relative to $A$ on any of the surfaces $\mathcal{S}_{\epsilon}$ leads to the integration of the first-order ODE \eqref{ODE}.
\end{theorem}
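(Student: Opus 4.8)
The plan is to express the Jacobi equation relative to $A$ on $\mathcal{S}_{\epsilon}$ in the orthonormal frame $\{A, e^{-\epsilon}\partial_u\}$ and turn it into a scalar equation, then apply the operator factorization of Lemma \ref{lem:op} together with the correspondence of Proposition \ref{prop:intsymop} between kernels of $\mathfrak{S}_{\epsilon}$ and symmetrizing factors of $\partial_u$. Concretely, write a Jacobi vector field $J$ relative to $A$ in the frame as $J = a A + b\, e^{-\epsilon}\partial_u$ for smooth functions $a,b$ on $U$. Using the connection form \eqref{LCdef}, I would first compute $\nabla_A J$ and $\nabla_A\nabla_A J$ in this frame. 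Since $\nabla_A A = 0$ (Proposition \ref{geodesicDef}) and $\nabla_A(e^{-\epsilon}\partial_u)$ has components dictated by $i_A\Theta$, the $A$-component of the Jacobi equation \eqref{eq:Jacobirel} will decouple and reduce (after using $g(J,A)A = aA$) to $A^2(a) = 0$, while the orthogonal component will reduce to an equation of the form $A^2(b) + \mathcal{K}_{\epsilon} b = 0$, i.e. $(\mathfrak{T}_{\epsilon}\circ\mathfrak{S}_{\epsilon})(b) = 0$ by Lemma \ref{lem:op}.

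From there the argument splits. If $J$ is non-trivial as a Jacobi field relative to $A$ — meaning it is not a constant multiple of $A$ — then its orthogonal component $J^{\perp} = b\, e^{-\epsilon}\partial_u$ is not identically zero, so $b$ is not identically zero; restricting attention to an open set where $b$ is non-vanishing, I set $h := \mathfrak{S}_{\epsilon}(b)$. If $h$ is not identically zero there, then $h$ is (on a possibly smaller open set where it is non-vanishing) a solution of $\mathfrak{T}_{\epsilon}(h) = 0$, so by Proposition \ref{prop:intsymop}\ref{it:integrating} the function $e^{\epsilon} h$ is an integrating factor for \eqref{1form}, and a quadrature yields the general solution of \eqref{ODE}. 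If instead $h = \mathfrak{S}_{\epsilon}(b) \equiv 0$, then $b$ itself is a non-vanishing solution of $\mathfrak{S}_{\epsilon}(b) = 0$, so by Proposition \ref{prop:intsymop}(b) the function $e^{-\epsilon} b$ is a symmetrizing factor for $\partial_u$, hence by Lemma \ref{lem:factor} its reciprocal is an integrating factor for $\omega$, and again \eqref{ODE} is integrated by quadratures. Either way, the knowledge of $J$ produces an integrating factor explicitly.

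I would also remark on the role of the $a$-component: the condition $A^2(a) = 0$ shows that the part of $J$ tangent to $A$ contributes nothing essential (one may always subtract off a trivial Jacobi field $cA$), which is why the non-triviality hypothesis is exactly what is needed to guarantee $b \not\equiv 0$. It is worth noting the parallel with the proof of the earlier theorem linking Lie point symmetries and Jacobi fields relative to $A$ on $\mathcal{S}_0$: there the relevant scalar was $\delta = g(V,\partial_u)$ satisfying $A^2(\delta) + \mathcal{K}\delta = 0$, which is precisely the $\epsilon = 0$ case of the equation for $b$ here.

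\textbf{Main obstacle.} The principal technical point is the frame computation of $\nabla_A\nabla_A J$ on $\mathcal{S}_{\epsilon}$ and the verification that the orthogonal component of the Jacobi equation collapses exactly to $A^2(b) + \mathcal{K}_{\epsilon} b = 0$ with no cross terms in $a$ surviving — this relies on $\nabla_A A = 0$ and on the specific structure of $\Theta$ in \eqref{LCdef}, and one must be careful that the basis vector $e^{-\epsilon}\partial_u$ is itself $\epsilon$-dependent so that $\nabla_A(e^{-\epsilon}\partial_u)$ need not vanish a priori (in fact \eqref{LCdef} forces its frame components to be $(0,0)$, since $i_A\Theta$ annihilates the second basis vector, which is the key simplification). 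A secondary, more cosmetic obstacle is the usual caveat that integrating factors and symmetrizing factors are only guaranteed to be non-vanishing on open subsets; the statement is local in nature and the argument should be phrased accordingly, passing to a suitable neighborhood where $b$ (or $h$) is non-zero.
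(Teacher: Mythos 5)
Your main line of argument coincides with the paper's: decompose $J$ in the orthonormal frame $\{A,e^{-\epsilon}\partial_u\}$, use $\nabla_A A=\nabla_A(e^{-\epsilon}\partial_u)=0$ (which follows from \eqref{LCdef}, since $i_A\Theta=0$ because $\omega_{\epsilon}^2(A)=0$) to reduce the Jacobi equation \eqref{eq:Jacobirel} to the pair $A^2(a)=0$ and $A^2(b)+\mathcal{K}_{\epsilon}b=0$, factor the latter as $(\mathfrak{T}_{\epsilon}\circ\mathfrak{S}_{\epsilon})(b)=0$ via Lemma \ref{lem:op}, and run the dichotomy on $\mathfrak{S}_{\epsilon}(b)$ through Proposition \ref{prop:intsymop} to produce an integrating factor in either branch. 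That part is correct and is exactly the paper's argument, including the local caveats about non-vanishing.

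There is, however, a genuine gap in your reduction to the case $b\not\equiv 0$. You claim that non-triviality of $J$ (i.e., $J$ not a constant multiple of $A$) forces its orthogonal component to be non-zero, and you justify discarding the tangential part by saying one may "subtract off a trivial Jacobi field $cA$". This is false: by Remark \ref{rem:jacobi} only \emph{constant} multiples of $A$ are trivial, so a field $J=aA$ with $a$ non-constant and $A^2(a)=0$ (for instance $a=x$, or any $a$ whose derivative $A(a)$ is a non-constant first integral) is a non-trivial Jacobi field relative to $A$ with $b\equiv 0$, and your argument extracts nothing from it. The paper treats this case as a separate branch: from $A^2(a)=0$ with $a$ non-constant, either $A(a)$ is non-constant and is therefore a first integral of $A$, or $A(a)=k\in\mathbb{R}$ and then $a-kx$ is a first integral; in both situations the ODE is solved without ever producing an integrating factor. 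You need to add this branch for your proof to cover the full statement of the theorem.
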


\begin{proof}
Let $J$ be a non-trivial Jacobi field relative to $A$ on $\mathcal{S}_{\epsilon}$, for certain $\epsilon\in \mathcal{C}^{\infty}(U)$. We can decompose $J$ into its components with respect to the frame $\{A,e^{-\epsilon}\partial_u\}$ as follows:
$$
J=\sigma A+\delta e^{-\epsilon}\partial_u,
$$
where $\sigma=g_{\epsilon}(J,A)$ and $\delta=g_{\epsilon}(J, e^{-\epsilon}\partial_u)$. 

Consider, first, the case where $\delta\neq 0$. We assume, without loss of generality, that $\delta$ is non-vanishing in $U$ (if necessary, $U$ can be appropriately reduced). Since $J$ is a Jacobi field relative to $A$, equation \eqref{JacobiDef} holds:
\begin{equation}
    \nabla_{A} \nabla_{A}\left( \sigma A+\delta e^{-\epsilon}\partial_u\right)+\mathcal{K}_{\epsilon} \delta e^{-\epsilon}\partial_u=0.
\end{equation}
Now, since $\nabla_{A} A=\nabla_{A}\left(  e^{-\epsilon}\partial_u\right)=0$, as can be deduced from the connection 1-form \eqref{LCdef}, we obtain 
\begin{equation}
    A^2(\sigma) A+A^2(\delta) e^{-\epsilon}\partial_u+ \mathcal{K}_{\epsilon} \delta e^{-\epsilon}\partial_u=0.
\end{equation}
Therefore, $\delta$ must satisfy
\begin{equation}\label{eq:JacDelta}
A^2(\delta)+\mathcal{K}_{\epsilon} \delta=0.
\end{equation}

According to Lemma \ref{lem:op}, we have that
$$
\left(\mathfrak{T}_{\epsilon}\circ \mathfrak{S}_{\epsilon} \right)(\delta)=0,
$$
and we can use Lemma \ref{prop:intsymop} to conclude that
\begin{itemize}
    \item either $\mathfrak{S}_{\epsilon}(\delta)=0$ and then $e^{\epsilon} \delta^{-1}$ is an integrating factor of \eqref{ODE};
    \item or $\mathfrak{S}_{\epsilon}(\delta)\neq 0$ and $\mathfrak{T}_{\epsilon}(\mathfrak{S}_{\epsilon}(\delta))=0$, so $e^{\epsilon} \mathfrak{S}_{\epsilon}(\delta)$ is an integrating factor for \eqref{ODE}.
\end{itemize}
Therefore, equation \eqref{ODE} can integrate it by quadratures.

Suppose now that $\delta=0$, and therefore $\sigma\neq 0$. If we write equation \eqref{JacobiDef} in this particular case we obtain:
\begin{equation}
    \nabla_{A} \nabla_{A}\left( \sigma A\right)=A^2(\sigma)A=0,
\end{equation}
and $A^2(\sigma)=0$. Consequently, if $A(\sigma)$ is not constant, it is a non-trivial first integral of $A$. 

On the contrary, if $A(\sigma)= a\in \mathbb R$, then $A(\sigma-ax)=0$, and $\sigma-ax$ would be a first integral of $A$. Observe that it is, indeed, a non-trivial first integral, because if 
$$
\sigma-ax=b, b\in\mathbb R,
$$
then $J=(ax+b)A$, and it would be a trivial Jacobi field relative to $A$.

In any case, equation \eqref{ODE} is solved, and the result is proven.
\end{proof}

In conclusion, deforming $\mathcal{S}_0$ into any surface where a non-trivial Jacobi field with respect to $A$ can be found enables the integration of equation \eqref{ODE}. As a consequence, we have the following result:

\begin{theorem}\label{th:curv}
Given the first-order ODE \eqref{ODE}, the deformation of the associated surface $\mathcal{S}_0$ into a surface of constant curvature leads to its integration by quadratures.
\end{theorem}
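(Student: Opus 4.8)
The plan is to show that a constant-curvature deformed surface $\mathcal{S}_\epsilon$ always admits a non-trivial Jacobi field relative to $A$, so that Theorem \ref{th:Jacobi} applies. Suppose $\mathcal{K}_\epsilon = k$ is constant. By Proposition \ref{prop:curvature}, this means $A(\Delta_\epsilon) + \Delta_\epsilon^2 = -k$, which is a Riccati-type equation along the integral curves of $A$, but we will not even need to solve it. Instead, I would look directly for a non-trivial Jacobi field of the form $J = \delta\, e^{-\epsilon}\partial_u$; by the computation in the proof of Theorem \ref{th:Jacobi}, this requires $A^2(\delta) + k\,\delta = 0$, i.e. $\delta$ must satisfy a linear second-order ODE with constant coefficient $k$ along each integral curve of $A$.

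The key step is then to produce such a $\delta$ globally on $U$ (possibly after shrinking $U$), which amounts to straightening out $A$. Since $A = \partial_x + \phi\,\partial_u$ is nowhere zero, locally there is a coordinate change $(x,u)\mapsto(\tilde x, c)$ with $A = \partial_{\tilde x}$, so the equation $A^2(\delta) + k\delta = 0$ becomes $\partial^2_{\tilde x}\delta + k\delta = 0$, whose solutions are, for each fixed $c$, spanned by $\{\cos(\sqrt{k}\,\tilde x), \sin(\sqrt{k}\,\tilde x)\}$ when $k>0$, by $\{\cosh(\sqrt{-k}\,\tilde x), \sinh(\sqrt{-k}\,\tilde x)\}$ when $k<0$, and by $\{1, \tilde x\}$ when $k=0$. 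Picking, say, $\delta(\tilde x, c) = \cos(\sqrt{k}\,\tilde x)$ (resp. the hyperbolic cosine, resp. $\tilde x$) gives a smooth non-vanishing function on a suitable neighborhood, hence — after possibly reducing $U$ as allowed in the proof of Theorem \ref{th:Jacobi} — a non-trivial Jacobi field $J = \delta e^{-\epsilon}\partial_u$ relative to $A$ on $\mathcal{S}_\epsilon$. The case $k=0$ recovers Theorem \ref{th:integr}; note that the trivial solution $\delta = 1$ would give $J = e^{-\epsilon}\partial_u$ with $A^2(\delta) = 0$, so for $k=0$ one should take $\delta = \tilde x$ to ensure non-triviality (i.e. $J$ not a constant multiple of anything forced by Remark \ref{rem:jacobi}); since $A(\delta)=1\ne 0$ this $J$ is genuinely non-trivial.

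Once a non-trivial Jacobi field relative to $A$ on $\mathcal{S}_\epsilon$ is exhibited, Theorem \ref{th:Jacobi} immediately yields the integration of \eqref{ODE} by quadratures, completing the proof. I would phrase the argument so that it does not actually require constructing the straightening coordinates explicitly: it suffices to invoke the classical existence theorem for solutions of the linear second-order ODE $A^2(\delta) + k\delta = 0$ along the flow of $A$ with, say, initial data $\delta = 0$, $A(\delta) = 1$ on a hypersurface transverse to $A$ (for instance $\{x = x_0\}$), which guarantees a smooth solution with $A(\delta)$ not identically zero, hence a non-trivial $J$.

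The main obstacle is a minor but real one: ensuring the Jacobi field produced is genuinely \emph{non-trivial} in the sense of Remark \ref{rem:jacobi} (not a constant multiple of $A$) and that $\delta$ can be taken non-vanishing (or, if one lands in the $\delta = 0$ branch of Theorem \ref{th:Jacobi}, that $\sigma$ is non-constant). Choosing the solution of the linear equation with initial data $\delta|_{x=x_0} = 0$, $A(\delta)|_{x=x_0} = 1$ handles this: such $\delta$ is not proportional to the constant function and, restricted to where it is non-zero, feeds the $\delta\ne 0$ branch; and even on its zero set the component $\sigma = 0$ case of Theorem \ref{th:Jacobi} still delivers integrability since $A(\sigma)$ there would have to be handled — but in fact the cleanest route is simply to shrink $U$ to a neighborhood where $\delta \ne 0$, as the proof of Theorem \ref{th:Jacobi} explicitly permits. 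Everything else is a direct appeal to results already established in the paper.
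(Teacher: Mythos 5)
Your proposal is correct and follows essentially the same route as the paper: exhibit a function $\delta$ with $A^2(\delta)+k\delta=0$, form the non-trivial Jacobi field $J=\delta e^{-\epsilon}\partial_u$, and invoke Theorem \ref{th:Jacobi}. The paper obtains $\delta$ more simply by noting that for $\delta=\delta(x)$ one has $A^2(\delta)=\delta''(x)$, so it suffices to solve the constant-coefficient ODE $\delta''+k\delta=0$ explicitly (no straightening coordinates needed), and your extra care about non-triviality is unnecessary since any nonzero $\delta e^{-\epsilon}\partial_u$ is automatically not a constant multiple of $A$.
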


\begin{proof}
Consider a first-order ODE \eqref{ODE}, and suppose that $\epsilon$ is such that $\mathcal{S}_{\epsilon}$ has constant curvature $\mathcal{K}_{\epsilon}=k\in\mathbb{R}$. According to the calculations in the proof of Theorem \ref{th:Jacobi}, the vector field defined by $J:=\delta e^{-\epsilon}\partial_u$ IS a non-trivial Jacobi field relative to $A$ if and only if
$$
A^2(\delta)+k \delta=0.
$$

We can assume $\delta=\delta(x)$ and solve the second-order ODE
\begin{equation}\label{eqDeltaKcte}
    \delta''+k \delta=0
\end{equation}
to determine $\delta$. It is well known that the general solution to this equation is
\begin{equation}\label{eq:deltaKcte}
    \delta(x)=\begin{cases}
        A \cos\left(\sqrt{k}x\right)+B \sin\left(\sqrt{k}x\right), & \text{ if } k>0,\\
        A+B x, & \text{ if } k=0,\\
        A \cosh\left(\sqrt{-k}x\right)+B \sinh\left(\sqrt{-k}x\right), & \text{ if } k<0,
        \end{cases}
\end{equation}
with $A,B\in\mathbb{R}$.

Then, for any constant curvature, we have a non-trivial Jacobi field $J=\delta e^{-\epsilon}\partial_u$ relative to $A$, and according to Theorem \ref{th:Jacobi} we can integrate equation \eqref{ODE}.
\end{proof}

In the following example we illustrate this result with a first-order ODE whose associated surface can be deformed into a surface of constant positive curvature.

\begin{example}\label{ex:defpos}
Consider the first-order ODE
\begin{equation}\label{odeEx3}
    \dfrac{du}{dx}=u^2,
\end{equation}
whose associated surface $\mathcal{S}_0$ has Gaussian curvature
$$
\mathcal{K}_0=-\partial_u(A(u^2))=-6u^2.
$$

To deform this surface into a constant positive curvature surface we take
$$
\epsilon=\ln\left(\frac{1}{u^2} \sin\left(\frac{1}{u}\right)\right).
$$
In this case, we have $\Delta_{\epsilon}=-\cot\left(\frac{1}{u}\right)$, and then the surface $\mathcal{S}_{\epsilon}$ has curvature
$$
\mathcal{K}_{\epsilon}=-A(\Delta_{\epsilon})-\Delta_{\epsilon}^2=1,
$$
where $A=\partial_x+u^2 \partial_u$.

Therefore, we take the particular solution of \eqref{eqDeltaKcte} given by $\delta(x)=\sin\left(x\right)$, which thus satisfies
$$
A^2(\delta)+\delta=0.
$$

By Lemma \ref{lem:op}, we have that
$$
\left(\mathfrak{T}_{\epsilon}\circ \mathfrak{S}_{\epsilon} \right)(\delta)=\left(\mathfrak{T}_{\epsilon}\circ \mathfrak{S}_{\epsilon} \right)(\sin(x))=0.
$$
%according to equation \eqref{eq:deltaKcte}, we can take $\delta(x)=\sin\left(x\right)$ and a Jacobi vector field relative to $A$ is given by $J=\sin\left(x\right)e^{-\epsilon}\partial_u$. This vector field is orthogonal to $A$  and therefore $\delta$ satisfies equation \eqref{eq:JacDelta}. 
 
Finally, since
$$
\mathfrak{S}_{\epsilon} (\sin(x))=A(\sin(x))-\Delta_{\epsilon} \sin(x)=\cos(x)+\cot\left(\frac{1}{u}\right)\sin(x)\neq 0,
$$
we conclude that, necessarily
$$
\mathfrak{T}_{\epsilon}(\mathfrak{S}_{\epsilon} (\sin(x)))=0,
$$
and then, by part \ref{it:integrating} in Lemma \ref{prop:intsymop}, we have that
$$
\begin{aligned}
e^{\epsilon} \mathfrak{S}_{\epsilon} (\sin(x))&=\frac{1}{u^2} \sin\left(\frac{1}{u}\right)\left(\cos(x)+\cot\left(\frac{1}{u}\right)\sin(x)\right)\\
                                        &=\frac{1}{u^2} \sin\left(x+\frac{1}{u}\right)
\end{aligned}
$$
is an integrating factor for \eqref{odeEx3}.
\end{example}

Now we show an example of a first-order ODE whose associated surface can be deformed into a surface of constant negative curvature.
\begin{example}
Consider the first-order ODE
\begin{equation}\label{odeEx4}
    \dfrac{du}{dx}=\dfrac{1-3x u}{x^2}.
\end{equation}
The associated vector field is $A=\partial_x+\frac{1-3x u}{x^2} \partial_u$, and the Gaussian curvature of the associated surface $\mathcal{S}_0$ is
$$
\mathcal{K}_0=-\partial_u\left(A\left(\dfrac{1-3x u}{x^2}\right)\right)=-\dfrac{12}{x^2}.
$$

We can deform it by using $\epsilon=x+3\ln(x)$, in such a way that now $\Delta_{\epsilon}=1$. Then
$$
\mathcal{K}_{\epsilon}=-A(\Delta_{\epsilon})-\Delta_{\epsilon}^2=-1.
$$

In this case a suitable $\delta$ giving rise to a Jacobi field relative to $A$ is 
$$
\delta(x)=\sinh(x),
$$
so by Lemma \ref{lem:op} we have that
$$
\left(\mathfrak{T}_{\epsilon}\circ \mathfrak{S}_{\epsilon} \right)(\sinh(x))=0.
$$

Observe that $\mathfrak{S}_{\epsilon} (\sinh(x))=A(\sinh(x))-\Delta_{\epsilon} \sinh(x)=\cosh(x)-\sinh(x)=e^{-x}$, which is not constant. Since
$$
\mathfrak{T}_{\epsilon}(e^{-x})=0,
$$
we have by Lemma \ref{prop:intsymop} part \ref{it:integrating} that
$$
\begin{aligned}
e^{\epsilon} e^{-x}=e^{x+3\ln(x)}e^{-x}=x^3
\end{aligned}
$$
is an integrating factor for \eqref{odeEx4}.

\end{example}

An immediate conclusion of Theorem \ref{th:curv} is the following corollary, which extends the result of Theorem \ref{th:integr} to the case of surfaces of constant curvature:

\begin{corollary}\label{cor:constant}
If the associated surface $\mathcal{S}_0$ has constant curvature, then the ODE can be integrated by quadratures.
\end{corollary}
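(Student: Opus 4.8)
The plan is to deduce Corollary \ref{cor:constant} directly from Theorem \ref{th:curv} by observing that the hypothesis is the special case $\epsilon=0$ of that theorem. Indeed, the family of deformed surfaces $\{\mathcal{S}_{\epsilon}:\epsilon\in\mathcal{C}^{\infty}(U)\}$ contains the original associated surface as $\mathcal{S}_0$, so if $\mathcal{S}_0$ already has constant curvature, there is nothing to deform: we simply take the deformation parameter to be the zero function.

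Concretely, I would proceed as follows. First, assume $\mathcal{S}_0$ has constant Gaussian curvature, say $\mathcal{K}_0=k\in\mathbb{R}$. Next, apply Theorem \ref{th:curv} with the particular choice $\epsilon=0$: this choice produces the surface $\mathcal{S}_0$ itself, which by hypothesis has constant curvature $k$. The theorem then guarantees that this (trivial) deformation leads to the integration of \eqref{ODE} by quadratures. Equivalently, one can unwind the argument: with $\epsilon=0$ one has $\Delta_0=\phi_u$, and a non-vanishing function $\delta=\delta(x)$ solving $\delta''+k\delta=0$ — explicitly given by \eqref{eq:deltaKcte} — yields the non-trivial Jacobi field $J=\delta\,\partial_u$ relative to $A$ on $\mathcal{S}_0$; Theorem \ref{th:Jacobi} then delivers the integrating factor and hence the quadrature.

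There is essentially no obstacle here: the corollary is a genuine immediate consequence, and the only point worth making explicit is that the zero function is an admissible value of the deformation parameter, so that $\mathcal{S}_0$ itself falls within the scope of Theorem \ref{th:curv}. I would keep the proof to one or two sentences, simply citing Theorem \ref{th:curv} applied to $\epsilon=0$, perhaps with a parenthetical remark recalling Theorem \ref{th:integr} as the further special case $k=0$ to emphasize that the present statement strictly generalizes it.
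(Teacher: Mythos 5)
Your proposal is correct and matches the paper exactly: the paper states this as an immediate corollary of Theorem \ref{th:curv}, obtained by taking the trivial deformation $\epsilon=0$ so that $\mathcal{S}_0$ itself is the constant-curvature member of the family. Your unwinding via $\Delta_0=\phi_u$, the solution $\delta$ of $\delta''+k\delta=0$, and Theorem \ref{th:Jacobi} is precisely the mechanism the paper's proof of Theorem \ref{th:curv} uses.
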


To conclude, we will show that the deformation of the associated surface into a surface of zero curvature is the geometric counterpart of the integration of the ODE through the identification of an integrating factor.
\begin{proposition}
    The knowledge of an integrating factor for the first-order ODE \eqref{ODE} is equivalent to the deformation of the associated surface $\mathcal{S}_0$ into a surface of zero curvature.
\end{proposition}
\begin{proof}
Let $\mu$ be an integrating factor for \eqref{ODE}. Then, it satisfies
\begin{equation}\label{eq:mu}
    d(\mu \phi dx-\mu du)=(-\mu_u \phi-\mu \phi_u-\mu_x )dx \wedge du=0.   
\end{equation}
Therefore, we can take $\epsilon=\ln{\mu}$ (where we possibly have to restrict the open set $U$ of $\mu$ to ensure that $\epsilon$ is well defined), and then
$$
\Delta_{\epsilon}=-\mu_u \phi-\mu \phi_u-\mu_x=0,
$$
because of \eqref{eq:mu}. Thus, for this choice of $\epsilon$ we have
$$
\mathcal{K}_{\epsilon}=-A(\Delta_{\epsilon})-\Delta_{\epsilon}^2=0,
$$
so the deformed surface $\mathcal{S}_{\epsilon}$ has zero curvature.
\end{proof}

So, remarkably, we can think of Theorem \ref{th:curv} as a generalization of the integrating factor approach to solving first-order ODEs.

%%%%%%%%%%%%%%%%%%%%%%%%%%%%%%%%%%%%%%%%%%%%%%%%%%%%%%%%%%%%%%5
\section{Conclusions}
%%%%%%%%%%%%%%%%%%%%%%%%%%%%%%%%%%%%%%%%%%%%%%%%%%%%%%%%%%%%
In this paper, we have studied a particular approach for associating a surface with a given first-order ODE and explored how the integrability of the equation relates to geometric concepts such as geodesics, curvature, and Jacobi fields. 

We have introduced a notion of surface deformation, and showed that the deformation into a constant curvature surface implies integrability. The particular case of the deformation to a zero curvature surface is the geometric counterpart of the knowledge of an integrating factor for the equation.

Furthermore, Theorem~\ref{th:Jacobi} establishes that the knowledge of a non-trivial Jacobi field relative to $A$ on any deformed surface (including the original one) leads to the integration of the equation. Since Lie symmetries also provide a well-known mechanism for integrating an ODE, it is interesting to explore their relationship with Jacobi fields. 

We find it remarkable that a classical geometric notion such as the curvature of the associated surface, or its deformation, is directly related to the integrability of the equation; and we believe that this approach can provide a new perspective on the study of first-order ODEs.
\bibliographystyle{unsrt}  
\bibliography{references.bib}
\end{document}